\theoremstyle{plain}
\newtheorem{thm}{Theorem}[section]
\newtheorem{lem}[thm]{Lemma}
\newtheorem{prop}[thm]{Proposition}
\theoremstyle{definition}
\newtheorem{rem}[thm]{Remark}
\numberwithin{equation}{section}
\newcommand{\res}{\mathop{\hbox{\vrule height 7pt width .5pt depth 0pt
\vrule height .5pt width 6pt depth 0pt}}\nolimits}
\newcommand{\N}{\mathbb N} 
\newcommand{\R}{\mathbb R} 
\newcommand{\Sn}{\mathbb S} 
\newcommand{\Rn}{{\mathbb R}^n} 
\newcommand{\Ms}{{\mathbb M}^{n{\times}n}_{sym}}
\newcommand{\wto}{\rightharpoonup}
\newcommand{\e}{\varepsilon}
\newcommand{\LL}{{\mathcal L}}
\newcommand{\HH}{{\mathcal H}}
\newcommand{\M}{{\mathcal M}}
\newcommand{\C}{{\mathcal C}}
\let\O=\Omega
\begin{document}
 
\title[Traces of functions of bounded deformation]{Traces of functions of bounded deformation}
\author[J.-F. Babadjian]{Jean-Fran\c cois Babadjian}

\address[J.-F. Babadjian]{Universit\'e Pierre et Marie Curie -- Paris 6, CNRS, UMR 7598 Laboratoire Jacques-Louis Lions, Paris, F-75005, France}
\email{jean-francois.babadjian@upmc.fr}

\date{\today}
\subjclass[2010]{26B30; 46E35; 28A33}


\keywords{functions of bounded deformation, trace, rectifiability}

\begin{abstract}
This paper is devoted to give a simplified proof of the trace theorem for functions of bounded deformation defined on bounded Lipschitz domains of $\mathbb{R}^n$. As a consequence, the existence of one-sided Lebesgue limits on countably $\mathcal{H}^{n-1}$-rectifiable sets is also established.
\end{abstract}
\maketitle

\section{Introduction}

Functions of bounded deformation have been introduced in connexion with variational problems of small strain elasto-plasticity. These models lead to the minimization of energy functionals with linear growth with respect to the linearized strain, and, therefore, enforce finite energy configurations for which the linearized strain is an integrable function. 
Contrary to the case of elasticity, no Korn's inequality is available in the non-reflexible space $L^1$ (see \cite{O,CFM}), and usual compactness methods (adding a viscosity, Galerkin or finite difference approximations) forces one to fall within the framework of linear strains in the space of measures. 

The space $BD(\O)$ of {\it functions of bounded deformation} in an open set $\O$ of $\Rn$ has been introduced in \cite{MSC,Suquet1} as the natural energy space to study plasticity models (see \cite{Suquet3,ST2,AG}). It is made of all integrable functions $u:\O \to \Rn$ whose distributional strain -- the symmetric part of the distributional derivative $Eu:=(Du+Du^T)/2$ -- is a Radon measure with finite total variation. In the footsteps of these works, a systematic study of this space has been carried out, starting from basic properties (traces, imbeddings, Poincar\'e-Korn inequalities) in  \cite{AG,ST1,Suquet2}, to fine properties (structure of the strain measure, one-dimensional sections, approximate differentiability) in \cite{ACDM,H}, by analogy with the space $BV(\O)$ (see \cite{AFP} and references therein). We also refer to the monograph \cite{Temam} and the unpublished thesis \cite{Kohn} for additional information.

Later on, a variational model of fracture mechanics \cite{FM} gave another application of the space $BD(\O)$. This model consists in a constrained minimization of a Mumford-Shah type energy functional for which the subspace $SBD(\O)$ of {\it special functions of bounded deformation} introduced in \cite{ACDM,BCDM} (see also \cite{C}) has found relevant. Recently, the space $GBD(\O)$ of {\it generalized functions of bounded deformation} has been introduced and studied in \cite{DM} in order to palliate the lack of maximum principle in such vectorial variational problems. We also refer to \cite{BFT,E,ET,GZ,R} and references therein for applications of functions of bounded deformation to lower semicontinuity, relaxation, and related problems of the calculus of variations.

\medskip

A common typical feature of these models is that they develop strain concentration -- the so-called slip surfaces in plasticity and cracks in brittle fracture -- leading, from a macroscopic  point of view, to displacement discontinuities. For this reason, it is relevant to understand the nature of discontinuities of functions of bounded deformation which was one of the main purposes of \cite{ACDM,BCDM,C}. In particular, as in the case of functions of bounded variation, one needs to make precise the notion of trace on sets of co-dimension one. 

A first result in that direction has been given  in \cite{Suquet0} where a very weak notion of trace of $BD(\O)$ functions has been obtained as an element of the distribution space $W^{-1,\frac{n}{n-1}}(\partial \O;\Rn)$. Subsequently, this result has been improved in \cite{ST1,Suquet2} by establishing that the trace actually belongs to $L^1(\partial \O;\R^n)$. 
However, it turns out that the proofs presented in those references, only given for domains of class $\C^1$, contain several imprecisions (see the paragraph below). In addition, the arguments do not  easily extend to Lipschitz domains, although such a result has been stated and used many times. Since we did not locate a precise non-ambiguous proof in the literature, and according to the importance of this result, we decided to present in this work a rigorous proof of the trace theorem in $BD(\O)$ for Lipschitz domains $\O$.

\medskip

In order to highlight the imprecisions in \cite{ST1,Suquet2}, let us briefly sketch the general idea of the proofs in the simplified two-dimensional case ($n=2$). As usual in trace theory, a localization argument allows one to reduce to the case where $\O$ is written as the sub-graph $\{ (x_1,x_2) : x_2<a(x_1)\}$ of some $\C^1$ function $a: \RÊ\to \R$ in some orthogonal basis $(e_1,e_2)$ of $\R^2$, and $u$ has compact support in $\overline \O$. Since smooth functions are not dense in $BD(\O)$ for the norm topology, the trace cannot be defined as the unique continuous extension of the restriction mapping to the boundary. It is rather obtained by taking the limit of $u(x)$ as the point $x \in \O$ tends to the boundary. Contrary to the case of functions of bounded variation, since we only control the symmetric part of the gradient, it is not possible to control simultaneously all the components of the trace. Indeed, if $x \in \partial \O$, taking the limit of $u(x-t \xi)$ as $t \to 0^+$ with respect to some direction $\xi \in \R^2$, only allows one to get an estimate of the trace of the component of $u$ along the direction $\xi$. A formal justification of this fact is given by the following computation: for any vectors $\xi$ and $\zeta \in \R^2$ pointing outside $\O$, and any point $x \in \partial \O$,
$$u(x)\cdot \zeta =-\int_0^{+\infty} \frac{d}{dt} [u(x-t \xi) \cdot \zeta] \, dt=\int_0^{+\infty} Du(x-t\xi): (\zeta \otimes \xi)\, dt$$
which can be estimated in terms of $Eu$ only if $\zeta=\xi$ since 
$$Du: (\xi \otimes \xi)=Eu: (\xi \otimes \xi).$$ 
If $\xi$ is a transversal direction to $\partial \O$, it is proved in \cite{ST1} that the function $x \in \partial \O \mapsto u(x-t \xi) \cdot \xi$ admits a limit $g_\xi$ in $L^1(\partial \O;\Rn)$ as $t \to 0^+$ satisfying the integration by parts formula: for all $\varphi \in \C^1(\R^2)$,
\begin{equation}\label{IPP}
\int_\O (u \cdot \xi) (\nabla \varphi \cdot \xi)  \, dx + \int_\O \varphi \, d[Eu:(\xi \otimes \xi)] = \int_{\partial \O} \varphi g_\xi (\nu \cdot \xi) \, d\HH^1,
\end{equation}
where $\nu$ is the outer unit normal to $\partial \O$. Since $e_2$ is clearly transversal to $\partial \O$, it permits to define the second component $\gamma_2(u)$ of the trace of $u$ in the basis $(e_1,e_2)$ by $g_{e_2}$, and it also shows the validity of the integration by parts formula involving the second diagonal term $E_{22}u$. The definition of the first component of the trace is more involved. 

Let us first explain the strategy of \cite{ST1}. Using the implicit function theorem, whose application strongly rests on the $\C^1$ character of the boundary, it is shown that for $\lambda$ large enough, the vectors $\lambda e_2 \pm e_1$ are transversal to $\partial \O$. At this point, the authors of \cite{ST1} claim that it is possible to define the first component $\gamma_1(u)$ of the trace of $u$ in the basis $(e_1,e_2)$ by adding-up the integration by parts formulas \eqref{IPP} with $\xi=\lambda e_2+e_1$ and $\xi=\lambda e_2-e_1$, which would also establish the integration by parts formula involving the first diagonal term $E_{11}u$. Next subtracting \eqref{IPP} with $\xi=\lambda e_2+e_1$ and $\xi=\lambda e_2-e_1$ would lead to the last integration by parts formula involving the anti-diagonal term $E_{12}u$. 
Unfortunately, a careful inspection of that proof shows that this argument can be rigorously justified only if we {\it a priori} knew that $\xi \mapsto g_\xi$ is linear. Of course this property is related to the linearity of the trace mapping $\gamma$. The point is that, exactly as in the case of functions of bounded variation, the linearity (and the uniqueness) of $\gamma$ is ensured thanks the {\it full} integration by parts formula. Unfortunately, this property is not clear at this step of the proof, and this point seems to have been neglected in \cite{ST1}.

On the other hand, the argument developed in \cite{Suquet2} rests on the fact that, using again the implicit function theorem, and thus the $\C^1$ character of the boundary, there is no loss of generality to assume that the function $a:\R \to \R$ (defining the boundary curve) is one to one. Therefore, writing $\partial \O$ as $\{(x_1,x_2) : x_1=a^{-1}(x_2)\}$ permits to define correctly the first component $\gamma_1(u)$ of the trace in the basis $(e_1,e_2)$, and as a by-product to establish the integration by parts formula involving the term $E_{11} u$.  The main drawback of this approach is that the components of $\gamma(u)$ are not obtained by the same limit procedure, and therefore the last formula, involving the term $E_{12}u$, does not follow straightforwardly from the construction.

In conclusion, both arguments given in \cite{ST1,Suquet2} seem to present some ambiguities, and this is the reason why we decided to give a simplified rigorous and self contained proof of that result. Let us close this introduction by pointing out the recent result of \cite{DM} where a notion of trace for a generalized class of functions of bounded deformation,  called $GBD$, has been introduced as the approximate limit when points tend to the boundary (see Section 5 of that paper). In particular, the existence of traces on submanifolds of class $\C^1$ and on Lipschitz boundaries has been established by means of a method similar to that used in \cite{ST1}. If restricting to $BD$ fields, it would be possible to infer the stronger trace results, Theorem \ref{thm:BD} and Proposition \ref{prop:rect}, of the present paper at the expense of the integration by parts formula which was actually the missing point of \cite{ST1}. We however militate in favor of our approach since it only uses elementary tools of standards $BD$ functions, in contrast with \cite{DM} where the analysis of generalized $BD$ functions needs a more complicated treatment and finer geometric measure theoretic arguments.

\medskip

The paper is organized as follows. After gathering the main notation used throughout this work in Section \ref{sec:notations}, we give in Section \ref{sec:trace} a self contained proof of the trace theorem in $BD(\O)$ for bounded open sets $\O \subset \Rn$ with Lipschitz boundary, on the one hand generalizing the results of \cite{ST1,Suquet2}, and, on the other hand, getting rid of the imprecisions present in both papers. As a by-product, we also show in Section \ref{sec:rect} the existence of one-sided Lebesgue limits on countably $\HH^{n-1}$-rectifiable sets. The proofs presented below follow the lines of the analogous result for functions of bounded variation in \cite{EG}. 

\section{Notation}\label{sec:notations}

\subsection{Vectors and matrices}

If $x$ and $y \in \Rn$ we use the notation $x \cdot y$ for the scalar product in $\Rn$, and $|x|$ for the norm. We denote by $\mathbb M^{n \times n}$ the set of all real $n \times n$ matrices, and by $\Ms$ the subset of all  symmetric matrices. If $A$ and $B \in \mathbb M^{n \times n}$, we write $A:B$ for the scalar product in $\mathbb M^{n \times n}$, and $|A|$ for the norm. We recall that for any two vectors $a$ and $b \in \R^n$, $a \otimes b \in \mathbb M^{n \times n}$ stands for the tensor product, {\it i.e.}, $(a \otimes b)_{ij}=a_i b_j$ for all $1 \leq i ,j\leq n$, and $a \odot b:=(a \otimes b + b \otimes a)/2 \in \Ms$ is the symmetric tensor product. It is easy to check that 
\begin{equation}\label{eq:ab}
|a \odot b|\geq \frac{1}{\sqrt 2}|a| |b|.
\end{equation}

\subsection{Sets}

If $x \in \Rn$ and $\varrho>0$, then $B_\varrho(x)$ stands for the open ball of center $x$ and radius $\varrho$. If $x=0$, we simply write $B_\varrho$ instead of $B_\varrho(0)$. The Lebesgue measure in $\R^n$ is denoted by $\LL^n$, and the $(n-1)$-dimensional Hausdorff measure by $\HH^{n-1}$. We use the notation $\Sn^{n-1}:=\{Ê\zeta \in \R^n : |\zeta|=1\}$ for the unit sphere of $\Rn$ and $\omega_n=\LL^n(B_1)$, so that $\HH^{n-1}(\Sn^{n-1})=n \omega_n$. 

Given a vector $\xi \in \Sn^{n-1}$, we write $\Pi_\xi:=\{x \in \Rn : x \cdot \xi=0\}$ for the hyperplane orthogonal to $\xi$ and passing through the origin. We denote by $\pi_\xi$ the orthogonal projection onto $\Pi_\xi$, {\it i.e.}, $\pi_\xi(x)=x-(x\cdot \xi) \xi \in \Pi_\xi$ for any $x \in \R^n$.

We say that a set $\Gamma \subset \Rn$ is countably $\HH^{n-1}$-rectifiable if $\Gamma= \bigcup_{i=1}^\infty \Gamma_i \cup N$, where $\Gamma_i \subset M_i$ for some $(n-1)$-dimensional  manifolds $M_i \subset \R^n$ of class $\C^1$, while $N \subset \Rn$ is a $\HH^{n-1}$-negligible set (see \cite[Section 2.9]{AFP}). It is possible to define, for $\HH^{n-1}$-a.e. $x \in \Gamma$, an approximate unit normal, denoted by $\nu_\Gamma(x)\in \Sn^{n-1}$, and characterized by
\begin{equation}\label{approx}
\nu_\Gamma(x)=\pm \nu_{M_i}(x) \quad \text{for all } i \geq 1, \quad\text{and for }\HH^{n-1}\text{-a.e. }x \in \Gamma \cap M_i,
\end{equation}
where $\nu_{M_i}(x)$ is a normal to $M_i$ at the point $x$ (see \cite[Section 2.11]{AFP}).

\subsection{Measures}

Given an open subset $\O$ of $\R^n$ and a finite dimensional Euclidian space $X$, we denote by $\M(\O;X)$ the space of all $X$-valued Radon measures with finite total variation. If $X=\R$, we simply write $\M(\O;\R)=\M(\O)$. According to the Riesz representation Theorem, $\M(\O;X)$ can be identified to the topological dual of $\C_0(\O;X)$ (the space of all continuous functions $\varphi : \O \to X$ such that $\{|\varphi| \geq \e\}$ is compact for every $\e>0$), and a weak* topology is defined according to this duality. If $\mu \in \M(\O;X)$ then $|\mu|$ stands for the variation measure, and if $A \subset \O$ is a Borel set, the measure $\mu \res A$ is defined by $(\mu \res A)(B)=\mu (A \cap B)$ for every Borel set $B \subset \O$.

Another natural weak* topology in the space of measures is that generated by the topological dual of $\C_b(\O;X)$ (the space of bounded and continuous functions from $\O$ to $X$). We recall, in particular, that if $(\mu_k) \subset \M(\O;X)$ and $\mu \in \M(\O;X)$, then (see {\it e.g.}Ê Proposition 1.80 and Theorem 2.39 in \cite{AFP}),
\begin{equation}\label{eq:mu}
\begin{cases}
\mu_k \wto \mu \text{ weakly* in }\M(\O;X),\\
|\mu_k|(\O) \to |\mu|(\O),
\end{cases}
\Longrightarrow
\begin{cases}
|\mu_k| \wto |\mu| \text{ weakly* in }[\C_b(\O)]',\\
\mu_k \wto \mu\text{ weakly* in }[\C_b(\O;X)]'.
\end{cases}
\end{equation}

\subsection{Functions of bounded deformation}

The space of functions of {\it bounded deformation}, denoted by $BD(\O)$, is the space of all functions $u \in L^1(\O;\R^n)$ whose symmetric part of the distributional derivative satisfies 
$$Eu:=\frac{Du+Du^T}{2} \in \M(\O;\Ms).$$
The space $BD(\O)$ is a Banach space when endowed with the norm 
$$\|u\|_{BD(\O)}:=\|u\|_{L^1(\O)} + |Eu|(\O).$$ 
It is proved in \cite[Proposition 2.5]{ST1} that $BD(\O)$ can be identified to the dual of a Banach space, and therefore it can be endowed with a natural weak* topology. It turns out that a sequence $(u_k) \subset BD(\O)$ converges weakly* in $BD(\O)$ to some $u \in BD(\O)$ if and only if $u_k \to u$ strongly in $L^1(\O;\Rn)$ and $Eu_k \wto Eu$ weakly* in $\M(\O;\Ms)$. An intermediate notion of convergence between weak* and strong convergences is the so-called {\it strict convergence}: a sequence $(u_k) \subset BD(\O)$ converges strictly to some $u \in BD(\O)$ if and only if 
$$\begin{cases}
u_k \to u \text{ strongly in }L^1(\O;\R^n),\\
Eu_k \wto Eu \text{ weakly* in }\M(\O;\Ms),\\
|E u_k|(\O) \to |E u|(\O).
\end{cases}$$

Let us finally recall several results about functions of bounded deformation that will be used throughout this work. Let us stress that, of course, all these results are independent of the trace theorem. It is known that smooth functions are not dense in $BD(\O)$. However, weaker approximation results hold: for any $u \in BD(\O)$ there exists a sequence $(u_k) \subset \C^\infty(\O;\R^n) \cap BD(\O)$  such that $u_k \to u$ strictly (see \cite[Theorem 1.3]{AG}). If, in addition, $\O$ is bounded with Lipschitz boundary, the same result holds with $\C^\infty(\overline \O;\R^n)$ in place of $\C^\infty(\O;\R^n) \cap BD(\O)$ (see \cite[Theorem II-3.2]{Temam}). In the sequel, we shall also use the fact that, if $u \in BD(\O)$, the measure $Eu$ does not charge $\HH^{n-1}$ negligible sets (see \cite[Remark 3.3]{ACDM}).

\section{Trace on the boundary of Lipschitz domains}\label{sec:trace}

The object of this section is to show a trace theorem in $BD(\O)$ for bounded open sets $\O \subset \Rn$ with Lipschitz boundary, generalizing the results of  \cite[Theorem 1.1]{ST1} and \cite[Theorem 1]{Suquet2} for $\C^1$ domains. Note that the proofs of \cite{ST1,Suquet2} were strongly using the $\C^1$ character of the boundary through different applications of the implicit function theorem. We employ here another approach similar to that used in the case of functions of bounded variation \cite{EG}. It consists in using the density of smooth functions with respect to the strict convergence.

As explained in the introduction, the main difference with the case of functions of bounded variation is that, since we only control the symmetric part of the gradient of $u \in BD(\O)$, it is not possible to directly estimate at the same time all components of the trace with respect to the $BD(\O)$-norm of $u$. Indeed, if $\partial \O$ is locally written as the graph of some Lipschitz function in some direction, say, $e_n \in \Sn^{n-1}$, then one can only define the component of the trace with respect to this direction $e_n$. In order to define the $n-1$ other components, one should be able to slightly move the local coordinate system by still keeping the graph property of $\partial \O$. This is the object of the following well-known  geometric result.

\begin{lem}\label{lem:geom}
Let $(e_1,\ldots,e_n)$ be an orthonormal basis of $\R^n$, and let $a : \R^{n-1} \to \R$ be a Lipschitz function in this coordinate system (where $\R^{n-1}$ is identified to ${\rm Vect}(e_1,\ldots,e_{n-1})$, and $\R$ is identified to ${\rm Vect}(e_n)$). We define the Lipschitz graph  of $a$ by
$$\Sigma:=\{x=(x',x_n)\in \R^n : x_n = a(x')\},$$
where $x'=(x_1,\ldots,x_{n-1})$. 
Then there exists $\eta_0>0$ (depending only on the Lipschitz constant of $a$) such that for any $\xi \in \Sn^{n-1}$ with $|\xi - e_n| < \eta_0$, we have
$$\Sigma=\{x \in \R^n : x \cdot \xi = a_\xi(x - (x\cdot \xi) \xi)\},$$
for some Lipschitz function $a_\xi :\Pi_\xi \to \R$.
\end{lem}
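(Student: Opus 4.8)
The plan is to reduce the statement to the elementary fact that a Lipschitz graph, read through its associated cone condition, remains a graph when the projection direction is tilted by a small amount. Writing $L$ for the Lipschitz constant of $a$ and identifying $\R^{n-1}$ with $\Pi_{e_n}$, the set $\Sigma$ is characterized by $x\cdot e_n = a(\pi_{e_n}(x))$, and any two points $p,q\in\Sigma$ satisfy the cone condition $|(p-q)\cdot e_n|\le L\,|\pi_{e_n}(p-q)|$. For $\xi\in\Sn^{n-1}$ close to $e_n$ I would set $c:=\xi\cdot e_n$ and $s:=|\pi_{e_n}(\xi)|$, so that $c^2+s^2=1$, and define $a_\xi(z)$, for $z\in\Pi_\xi$, as the parameter $t$ for which the point $z+t\xi$ of the line through $z$ directed by $\xi$ belongs to $\Sigma$. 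The whole argument then amounts to showing that such a $t$ exists, is unique, and depends on $z$ in a Lipschitz manner.

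First I would fix the threshold. Since $|\xi-e_n|^2=2(1-c)$, the condition $|\xi-e_n|<\eta_0$ is equivalent to $c>1-\eta_0^2/2$; choosing
$$\eta_0:=\sqrt{2\Bigl(1-\tfrac{L}{\sqrt{1+L^2}}\Bigr)},$$
which depends only on $L$, guarantees $c>L/\sqrt{1+L^2}$, hence $c>Ls$. For fixed $z\in\Pi_\xi$ I would then introduce, with $\theta:=\pi_{e_n}(\xi)/s$ a unit vector of $\Pi_{e_n}$ (the case $s=0$ being trivial since then $\xi=e_n$),
$$g_z(t):=(z+t\xi)\cdot e_n - a\bigl(\pi_{e_n}(z+t\xi)\bigr)=z\cdot e_n+ct-a\bigl(\pi_{e_n}(z)+ts\,\theta\bigr).$$
Using that $a$ is $L$-Lipschitz, for $t_1<t_2$ one obtains $g_z(t_2)-g_z(t_1)\ge(c-Ls)(t_2-t_1)$, so $g_z$ is strictly increasing with slope bounded below by $c-Ls>0$; in particular $g_z(t)\to\pm\infty$ as $t\to\pm\infty$. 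The intermediate value theorem then yields a unique root, which defines $a_\xi(z)$, and by construction $z+t\xi\in\Sigma$ if and only if $t=a_\xi(z)$. Since $\pi_\xi(z+t\xi)=z$ and $(z+t\xi)\cdot\xi=t$, this is precisely the claimed identity $\Sigma=\{x:x\cdot\xi=a_\xi(\pi_\xi(x))\}$.

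It remains to check that $a_\xi$ is Lipschitz. Given $z_1,z_2\in\Pi_\xi$ with roots $t_i=a_\xi(z_i)$, I would compare $g_{z_1}$ and $g_{z_2}$: from $g_{z_2}(t_2)=0$ and the $L$-Lipschitz bound on $a$,
$$|g_{z_1}(t_2)|=|g_{z_1}(t_2)-g_{z_2}(t_2)|\le|(z_1-z_2)\cdot e_n|+L\,|\pi_{e_n}(z_1-z_2)|\le(1+L)\,|z_1-z_2|,$$
while the lower slope bound gives $|g_{z_1}(t_2)|=|g_{z_1}(t_2)-g_{z_1}(t_1)|\ge(c-Ls)\,|t_1-t_2|$. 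Combining the two yields $|a_\xi(z_1)-a_\xi(z_2)|\le\frac{1+L}{c-Ls}\,|z_1-z_2|$, a Lipschitz bound that is moreover uniform for $\xi$ near $e_n$. I expect the only genuinely delicate point to be the bookkeeping: recognizing that the Lipschitz cone condition is equivalent to the strict monotonicity of $g_z$ with the explicit slope $c-Ls$, and verifying that the threshold $\eta_0$ can be chosen depending on $L$ alone. Everything else is a direct application of the intermediate value theorem and of the Lipschitz estimate for $a$.
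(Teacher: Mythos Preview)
Your proof is correct. The approach, however, differs from the paper's: you parametrize each line $\{z+t\xi:t\in\R\}$ through $z\in\Pi_\xi$ and study the scalar function $g_z(t)=(z+t\xi)\cdot e_n-a(\pi_{e_n}(z+t\xi))$, whose lower slope bound $c-Ls>0$ yields existence and uniqueness of the root via the intermediate value theorem, and then the Lipschitz estimate for $a_\xi$ by comparing $g_{z_1}$ with $g_{z_2}$. The paper argues geometrically through cones: it sets $\alpha=L/\sqrt{1+L^2}$ and the open cone $C=\{\zeta:|\zeta\cdot e_n|>\alpha|\zeta|\}$, shows that $(x+\overline C)\cap\Sigma=\{x\}$ for every $x\in\Sigma$, then for each $\xi\in C\cap\Sn^{n-1}$ constructs a sub-cone $C_\xi\subset C$ about $\xi$, and deduces from $\Sigma\subset x+{}^cC_\xi$ that $\pi_\xi|_\Sigma$ is injective with Lipschitz inverse on $\pi_\xi(\Sigma)$, extending $a_\xi$ afterwards to all of $\Pi_\xi$. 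Both routes lead to the same threshold $\eta_0=\sqrt{2-2\alpha}$. Your argument is somewhat more economical --- no auxiliary sub-cone is needed --- and has the small bonus of delivering surjectivity of $\pi_\xi|_\Sigma$ onto $\Pi_\xi$ directly from the intermediate value theorem, whereas the paper obtains $a_\xi$ first on the image and then invokes a Lipschitz extension. The paper's cone picture, on the other hand, makes the geometric content (that $\Sigma$ satisfies a uniform exterior cone condition in every direction close to $e_n$) more transparent.
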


\begin{proof}
Let $L>0$ be the Lipschitz constant of $a$. Let us fix
\begin{equation}\label{alpha}
\alpha := \frac{L}{\sqrt{ 1+L^2}},
\end{equation}
and define the open cone
$$C:=\{Ê\zeta \in \R^n : |\zeta \cdot e_n| > \alpha |\zeta|\}.$$

\medskip

{\bf Step 1.} Let us show that, for any $x \in \Sigma$, $(x+\overline C) \cap \Sigma =\{x\}$. For any $\xi \in \overline C$, and for all $x \in \Sigma$, we define the straight line passing through $x$ and with direction $\xi$ by
$$L_x^\xi:=\{x+t\xi,\; t \in \R\}.$$
We first observe that $L_x^\xi\cap \Sigma \neq \emptyset $ since $x \in L_x^\xi \cap \Sigma$. Next, if $y \in L_x^\xi \cap \Sigma$ is any other point, then we can write it as $y=x+t \xi$ for some $t \in \R$, and thus, since $y \in \Sigma$, then $a(y')=y_n=y \cdot e_n = (x+t\xi) \cdot e_n=x_n+t\xi \cdot e_n$. Moreover, since $x \in \Sigma$, we also have that $x_n=a(x')$, whence
\begin{equation}\label{eq:a}
|a(y')-a(x')|=|y_n-x_n| = |t| |\xi \cdot e_n| > \alpha |tÊ\xi|  = \alpha |x-y|.
\end{equation}
On the other hand, Pythagoras Theorem ensures that $|x-y|^2 = |x_n-y_n|^2 + |x'-y'|^2 = |a(x')-a(y')|^2 + |x'-y'|^2$. Thus gathering with \eqref{eq:a} leads to $|a(y') - a(x')|^2 > \alpha^2|a(y') - a(x')|^2 + \alpha^2|x'-y'|^2$, or still by \eqref{alpha},
$$|a(y')-a(x')|^2> L^2 |x'-y'|^2.$$
But since the function $a$ is $L$-Lipschitz, we deduce that $y'=x'$, and thus $y=x$.

\medskip

{\bf Step 2.} Let us prove that for every $\xi \in C \cap \Sn^{n-1}$, there exists $\beta>0$ such that 
$$C_\xi:=\{\zeta \in \R^n : |\zeta \cdot \xi |> \beta |\zeta|\}Ê\subset C.$$
Note that since $C_\xi$ and $C$ are cones, it is enough to check that $C_\xi \cap \Sn^{n-1} \subset C$. Let us define $c_0:=(|\xi\cdot e_n|-\alpha) \wedge \sqrt{2} >0$, $\beta:=Ê(2-c_0^2)/2>0$, and let $\zeta \in C_\xi \cap \Sn^{n-1}$. Assuming first $\xi \cdot \zeta \geq 0$ leads to  $|\zeta - \xi|^2 = 2 - 2\zeta \cdot \xi < 2-2\beta=c_0^2$. Therefore, $|\zeta \cdot e_n| = |\xi \cdot e_n + (\zeta-\xi) \cdot e_n| \geq  |\xi \cdot e_n| - |\zeta -\xi|>  |\xi \cdot e_n|-c_0 \geq \alpha$. If instead $\xi \cdot \zeta \leq 0$, then  $|\zeta + \xi|^2 = 2 +2\zeta \cdot \xi < 2-2\beta=c_0^2$, and  again $|\zeta \cdot e_n| = |-\xi \cdot e_n + (\zeta+\xi) \cdot e_n| \geq  |\xi \cdot e_n| - |\zeta +\xi|>  |\xi \cdot e_n|-c_0 \geq \alpha$.

\medskip

{\bf Step 3.} Let us show the conclusion of the lemma. From Steps 1 and 2, for any $\xi \in C \cap \Sn^{n-1}$ and any $x \in \Sigma$ we have $(x+\overline{C_\xi}) \cap \Sigma=\{x\}$, and thus 
\begin{equation}\label{eq:sexy}
\Sigma \subset x+{}^c C_\xi \quad \text{ for every } \xi \in C \text{ and } x \in \Sigma.
\end{equation}
But if $\zeta \in {}^c C_\xi$, then $|\zeta \cdot \xi| \leq \beta |\zeta|$, and Pythagoras Theorem ensures that 
$$\frac{1}{\beta^2} |\zeta \cdot \xi|^2 \leq |\zeta|^2 = |\zeta \cdot \xi|^2 + |\pi_\xi(\zeta)|^2,$$
or still
\begin{equation}\label{eq:beta}
|\zeta \cdot \xi|^2 \leq \frac{\beta^2}{1-\beta^2} |\pi_\xi(\zeta)|^2\quad \text{ for every }\zeta \in {}^c C_\xi.
\end{equation}

Let us define the mapping $a_\xi : \pi_\xi(\Sigma) \to \R$ by $a_\xi(y):=t$, where $y+tÊ\xi \in \Sigma$. Let us check that this map is well defined. Indeed if $y_1$ and $y_2 \in \pi_\xi(\Sigma)$, then there exist
$x_1$ and $x_2 \in \Sigma$ such that $y_i=\pi_\xi(x_i)$ for $i=1$, $2$. Then \eqref{eq:sexy} shows that $x_1 \subset x_2 +{}^c C_\xi$, and thus $x_1-x_2 \in {}^c C_\xi$. Hence \eqref{eq:beta} yields in turn
\begin{equation}\label{eq:axi}
|x_1 \cdot \xi- x_2 \cdot \xi|^2 \leq \frac{\beta^2}{1-\beta^2} |\pi_\xi(x_1)-\pi_\xi(x_2)|^2.
\end{equation}
Therefore, since $a_\xi(y_i)=x_i \cdot \xi$ for $i=1$, $2$, then
$$y_1=y_2 \Rightarrow \pi_\xi(x_1)=\pi_\xi(x_2) \Rightarrow x_1 \cdot \xi= x_2 \cdot \xi \Rightarrow a_\xi(y_1)=a_\xi(y_2).$$
In addition, \eqref{eq:axi} also shows that $a_\xi$ is Lipschitz since
$$|a_\xi(y_1)- a_\xi(y_2)|^2 \leq \frac{\beta^2}{1-\beta^2} |y_1 - y_2|^2.$$
It is thus possible to extend $a_\xi$ as a Lipschitz function from $\Pi_\xi$ to $\R$ with the same Lipschitz constant. The conclusion is obtained by defining $\eta_0:=\sqrt{2-2\alpha}$, and noticing that if $\xi \in \Sn^{n-1}$ is such that $|\xi-e_n| <\eta_0$, then $\xi \in C$.
\end{proof}

As for $BV$ functions, smooth functions up to the boundary are not dense in $BD$ for the norm topology. Therefore the trace mapping cannot be obtained as the continuous extension of the restriction mapping to the boundary, as is usually done in Sobolev spaces. It will follow from the approximation of $BD$ functions by smooth ones with respect to the strict convergence. However, this construction does not {\it a priori} ensure the uniqueness of such a mapping nor its linearity. Nevertheless, it turns out that both properties actually hold among all maps satisfying the integration by parts formula, from which the trace in $BD$ is thus indissociable. The proof presented below is inspired from that in the $BV$ case (see \cite[Theorem 5.3.1]{EG}).

\begin{thm}\label{thm:BD}
Let $\O \subset \Rn$ be a bounded open set with Lipschitz boundary. There exists a unique linear continuous mapping
$$\gamma : BD(\O) \to L^1(\partial \O;\R^n)$$
such that the following integration by parts formula holds: for every $u \in BD(\O)$ and $\varphi \in \C^1(\R^n)$,
\begin{equation}\label{eq:IPP2}
\int_\O u \odot \nabla \varphi\,  dx +\int_\O \varphi \, dEu = \int_{\partial \O} \gamma(u) \odot \nu\,  \varphi \, d\HH^{n-1},
\end{equation}
where $\nu$ is the  outer unit normal to $\partial \O$. In addition, 
\begin{equation}\label{eq:C0}
\gamma(u)=u|_{\partial \O} \text{ for all }u \in \C^0(\overline \O;\R^n) \cap BD(\O).
\end{equation}
\end{thm}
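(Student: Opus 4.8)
The plan is to follow the strategy used for $BV$ functions in \cite[Theorem 5.3.1]{EG}: construct $\gamma$ by strict approximation and then pin it down through the integration by parts formula. By a standard partition of unity argument subordinate to a finite atlas of $\partial\O$, I would first reduce to the local model in which, after a rotation, $\O$ coincides near the support of $u$ with the subgraph $\{x_n<a(x')\}$ of a Lipschitz function $a:\R^{n-1}\to\R$, and $u$ has compact support. The whole difficulty, as stressed in the introduction, is that $Eu$ controls only the component $u\cdot\xi$ along a given direction $\xi$, through the identity $\partial_\xi(u\cdot\xi)=Eu:(\xi\otimes\xi)$. Here Lemma \ref{lem:geom} is essential: it furnishes an open cone of directions $\xi$ (those with $|\xi-e_n|<\eta_0$) for which $\partial\O$ is simultaneously a Lipschitz graph and transversal, i.e. $\nu\cdot\xi\ge c>0$ on $\partial\O$.

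The core step is the construction of the scalar directional trace. For $\xi$ in the above cone I would slice $\O$ by the lines $s\mapsto y+s\xi$, $y\in\Pi_\xi$; writing $\phi_y(s):=u(y+s\xi)\cdot\xi$, the one-dimensional sections are $BV$ and satisfy $\int_{\Pi_\xi}|D\phi_y|(\O_y)\,d\HH^{n-1}(y)\le|Eu|(\O)$, the latter bound being obtained by approximating $u$ by smooth $u_k\to u$ strictly (the inequality is an exact Fubini identity for $u_k$, and strict convergence supplies the matching upper bound through $\int_\O|Eu_k:(\xi\otimes\xi)|\le|Eu_k|(\O)\to|Eu|(\O)$ together with lower semicontinuity of the one-dimensional variation). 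Each $\phi_y$ then has a one-sided limit at the endpoint $s=a_\xi(y)$, which defines $g_\xi\in L^1(\partial\O)$, and an integration by parts on each line followed by Fubini yields the one-directional formula \eqref{IPP}. The crucial point, and the one I expect to be the main obstacle, is to prove that $u\mapsto g_\xi$ is continuous for the strict convergence, equivalently that $(u_k\cdot\xi)|_{\partial\O}\to g_\xi$ in $L^1(\partial\O)$: this rests on the elementary fact that for a one-dimensional $BV$ function strict convergence forces convergence of the endpoint trace (one controls $|\phi_y^k(a_\xi(y))-\phi_y^k(s)|$ by $|D\phi^k_y|((s,a_\xi(y)))$, which is uniformly small for $s$ near the endpoint thanks to the weak* convergence and the convergence of total masses of the sliced variations), and requires upgrading this to an $L^1(\partial\O)$ statement uniformly in $y$.

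Granting the strictly continuous scalar traces, the assembly of the full vector trace is then clean and, crucially, avoids the gap in \cite{ST1,Suquet2}. Choosing $n$ linearly independent directions $\xi_1,\dots,\xi_n$ inside the cone, I note that for smooth approximants $u_k|_{\partial\O}\cdot\xi_i=(u_k\cdot\xi_i)|_{\partial\O}\to g_{\xi_i}$ in $L^1(\partial\O)$; since the $\xi_i$ form a basis, the vector fields $u_k|_{\partial\O}$ themselves converge in $L^1(\partial\O;\R^n)$ to a limit which I define to be $\gamma(u)$, and consequently $\gamma(u)\cdot\xi=g_\xi$ for every admissible $\xi$. Passing to the limit in the classical divergence identity for $u_k$ — using $u_k\to u$ in $L^1(\O)$, $Eu_k\wto Eu$, and $u_k|_{\partial\O}\to\gamma(u)$ in $L^1(\partial\O)$ — gives the full formula \eqref{eq:IPP2}, while the a priori bound $\|\gamma(u)\|_{L^1(\partial\O)}\le C\|u\|_{BD(\O)}$ follows from the slice estimate and lower semicontinuity, yielding continuity of $\gamma$.

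Finally, uniqueness and linearity would be read off from \eqref{eq:IPP2} itself. If two fields $\gamma_1(u),\gamma_2(u)\in L^1(\partial\O;\R^n)$ both satisfy \eqref{eq:IPP2}, then $\int_{\partial\O}(w\odot\nu)\varphi\,d\HH^{n-1}=0$ for all $\varphi\in\C^1(\R^n)$ with $w:=\gamma_1(u)-\gamma_2(u)$; the fundamental lemma of the calculus of variations gives $w\odot\nu=0$ $\HH^{n-1}$-a.e., whence $w=0$ by the elementary inequality \eqref{eq:ab}. Since the relation \eqref{eq:IPP2} is jointly linear in $(u,\gamma(u))$, uniqueness immediately forces $\gamma(au+bv)=a\gamma(u)+b\gamma(v)$, proving linearity. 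The identification \eqref{eq:C0} for $u\in\C^0(\overline\O;\R^n)\cap BD(\O)$ is obtained by approximating $u$ by smooth fields converging both strictly and uniformly on $\overline\O$, so that $u_k|_{\partial\O}\to u|_{\partial\O}$ in $L^1(\partial\O)$, and identifying the limit with $\gamma(u)$ through uniqueness. A global $\gamma$ is recovered by gluing the local constructions, consistency across charts again following from the uniqueness afforded by \eqref{eq:IPP2}.
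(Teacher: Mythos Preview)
Your plan is correct and follows the same overall architecture as the paper: localize via a Lipschitz atlas, use Lemma~\ref{lem:geom} to get a cone of admissible directions, build directional traces along each such $\xi$, assemble the vector trace from $n$ independent directions in the cone, pass to the limit in the classical integration by parts for smooth approximants, and finally read off uniqueness and linearity from \eqref{eq:IPP2} together with \eqref{eq:ab}. The only genuine difference is in how you manufacture the directional trace. You define $g_\xi$ \emph{intrinsically} for $u\in BD(\O)$ by invoking that the one-dimensional sections $s\mapsto u(y+s\xi)\cdot\xi$ are $BV$ (with the Fubini bound $\int_{\Pi_\xi}|D\phi_y|\,d\HH^{n-1}\le|Eu|(\O)$) and then take their endpoint traces; afterwards you show that the boundary values of smooth strict approximants converge to $g_\xi$. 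The paper bypasses the slicing theory entirely: it works only with $u_k\in\C^\infty(\overline\O;\R^n)$, derives the pointwise estimate $\int_{\partial\O\cap A}|u_k(y-\e\xi)\cdot\xi-u_k(y)\cdot\xi|\,d\HH^{n-1}\le C_\xi|Eu_k|(A^\xi_\e)$ from the fundamental theorem of calculus, and uses it to show directly that $(\gamma(u_k))_k$ is Cauchy in $L^1(\partial\O;\R^n)$, the limit being \emph{defined} to be $\gamma(u)$. Your route is a bit more conceptual (the trace is an object attached to $u$ from the start), at the cost of quoting or reproving the $BD$ slicing inequality; the paper's route is more self-contained and elementary. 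For \eqref{eq:C0} you appeal to a smooth approximation that is simultaneously strict and uniform on $\overline\O$, which is fine (Temam's translate-then-mollify construction does this for continuous $u$); the paper instead passes the key estimate to the limit and uses continuity of $u$ to evaluate $\frac{1}{\e}\int_0^\e u(y-t\xi)\cdot\xi\,dt\to u(y)\cdot\xi$ directly. Both arguments are valid.
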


\begin{proof}

{\bf Step 1.} We first assume that $u \in \C^\infty(\overline \O;\Rn)$. In this case, we simply define the trace of $u$ on $\partial \O$ by the restriction of $u$ on that set: $\gamma(u):=u|_{\partial \O}$. This mapping is clearly linear, and we next prove that it is continuous with values in $L^1(\partial \O;\R^n)$.

\medskip

{\bf Step 1a.} In this step we study locally the trace mapping defined above. Let $x_0 \in \partial \O$, then there exists an open set $A' \subset \R^n$ containing $x_0$, an orthonormal basis $(e_1,\ldots,e_n)$ of $\R^n$, and (in this coordinate system) a Lipschitz mapping $a : \R^{n-1} \to \R$ such that
$$\begin{cases}
\O \cap A'= \{x=(x',x_n) \in A' : x_n <a(x')\},\\
\partial \O \cap A' = \{x=(x',x_n) \in A' : x_n =a(x')\}.
\end{cases}$$
We will prove that  for any open set $A \subset \subset A'$,
\begin{equation}\label{eq:cont-trace-LD0}
\int_{\partial \O \cap A} |\gamma(u)|\, d\HH^{n-1} \leq C \|u\|_{BD(\O \cap A')},
\end{equation}
where the constant $C>0$ is independent of $u$.

Let us define the graph of $a$ by $\Sigma:=\{x=(x',x_n) \in \Rn : x_n =a(x')\}$. According to Lemma \ref{lem:geom}, there exists $\eta_0>0$ such that for any $\xi \in \Sn^{n-1}$ with $|\xi - e_n| < \eta_0$, then $\Sigma=\{x \in \R^n : x \cdot \xi = a_\xi(x - (x\cdot \xi) \xi)\}$ for some Lipschitz function $a_\xi :\Pi_\xi \to \R$. Let $\xi \in \Sn^{n-1}$ with $|\xi - e_n| < \eta_0$, and let $\e_0>0$ be such that, for all $\e \leq \e_0$, the open sets 
$$A_\e^\xi:=\{z=y-t\xi : y \in \partial \O \cap A, \; 0<t<\e\}$$
are contained in $\O \cap A'$.
For all $y \in \partial \O \cap A$ and all $\e \in (0,\e_0]$, by the fundamental theorem of calculus, we have
$$u(y-\e\xi) \cdot \xi - u(y) \cdot \xi= \int_0^\e \frac{d}{dt} [u(y-t\xi) \cdot \xi] \, dt= -\int_0^\e Eu(y-t\xi)\xi \cdot \xi \, dt,$$
and thus, Fubini's Theorem yields
$$\int_{\partial \O\cap A} |u(y-\e\xi) \cdot \xi - u(y) \cdot \xi|\, d\HH^{n-1}(y) \leq \int_0^\e\int_{\partial \O \cap A} |Eu(y-t\xi)\xi \cdot \xi|\, d\HH^{n-1}(y) \, dt.$$
Using the area formula for Lipschitz graphs together with Fubini's Theorem, we get that 
\begin{eqnarray}\label{eq:estimBD}
\int_{\partial \O \cap A} |u(y-\e\xi) \cdot \xi - u(y) \cdot \xi|\, d\HH^{n-1}(y) & \leq & \int_{A_\e^\xi} |Eu\xi \cdot \xi| \sqrt{1+|\nabla a_\xi|^2} \, dx\nonumber\\
& \leq & C_\xi \int_{A_\e^\xi} |Eu\xi \cdot \xi| \, dx,
\end{eqnarray}
where $C_\xi>0$ only depends on the Lipschitz constant of $a_\xi$, and, in particular,
$$\int_{\partial \O \cap A} |\gamma(u) \cdot \xi|\, d\HH^{n-1} \leq C_\xi \int_{\O \cap A'} |Eu| \, dx + \int_{\partial \O \cap A} |u(y-\e \xi)|\, d\HH^{n-1}(y).$$
Integrating the previous inequality with respect to $\e \in (0,\e_0]$, and invoking once more Fubini's Theorem leads to
\begin{multline*}
\int_{\partial \O \cap A} |\gamma(u) \cdot \xi|\, d\HH^{n-1} \leq C_\xi \int_{\O \cap A'} |Eu| \, dx +\frac{1}{\e_0}\int_0^{\e_0} \int_{\partial \O \cap A} |u(y-t\xi)|\, d\HH^{n-1}(y)\, dt\\
\leq C_\xi\int_{\O \cap A'} |Eu| \, dx +\frac{1}{\e_0}\int_{A^\xi_{\e_0}} |u|\sqrt{1+|\nabla a_\xi|^2} \, dx\\
\leq C_\xi \int_{\O \cap A'} |Eu| \, dx +\frac{C_\xi}{\e_0}  \int_{\O\cap A'} |u| \, dx \leq C_\xi \|u\|_{BD(\O \cap A')}.
\end{multline*}

Taking first $\xi=e_n$ leads to $\gamma_n(u):=\gamma(u) \cdot e_n$ which satisfies the estimate
\begin{equation}\label{eq:gamman}
\int_{\partial \O \cap A} |\gamma_n(u)|\, d\HH^{n-1} \leq C_n \|u\|_{BD(\O \cap A')}.
\end{equation}
For $i\in \{1,\ldots,n-1\}$, let us take $\xi=\xi_i:=\frac{e_n+\delta e_i}{\sqrt{1+\delta^2}}$, where $\delta>0$ is small enough so that $|\xi_i-e_n|<\eta_0$. Then
$$\int_{\partial \O \cap A} |\gamma(u)\cdot \xi_i|\, d\HH^{n-1} \leq C_i \|u\|_{BD(\O \cap A')},$$
and, defining the other components of $\gamma(u)$ in the basis $(e_1,\ldots,e_n)$ by 
$$\gamma_i(u):=\gamma(u)\cdot e_i=\frac{\sqrt{1+\delta^2} \; \gamma(u)\cdot \xi_i - \gamma_n(u)}{\delta}$$ 
leads to
\begin{equation}\label{eq:gammai}
\int_{\partial \O \cap A} |\gamma_i(u)|\, d\HH^{n-1} \leq \left( C_i \frac{\sqrt{1+\delta^2}}{\delta}+\frac{C_n}{\delta} \right) \|u\|_{BD(\O \cap A')}.
\end{equation}
Combining \eqref{eq:gamman} and \eqref{eq:gammai} gives \eqref{eq:cont-trace-LD0}.

\medskip

{\bf Step 1b.} We now extend the local estimate \eqref{eq:cont-trace-LD0} into a global estimate. Since $\partial \O$ is compact, it can be covered by finitely many open sets $A_1,\ldots,A_N$ with the following properties: for all $i \in \{1,\ldots,N\}$, there exist an orthonormal basis $(e_1,\ldots,e_n)$ of $\R^n$ (depending on $i$), a Lipschitz function $a _i: \R^{n-1} \to \R$ (in this coordinate system), and open sets $A'_i$ with $A_i \subset\subset A'_i$ such that
$$\begin{cases}
\O \cap A'_i= \{x=(x',x_n) \in A'_i : x_n <a_i(x')\},\\
\partial \O \cap A'_i = \{x=(x',x_n) \in A'_i : x_n =a_i(x')\}.
\end{cases}$$
Let $\theta_1, \ldots,\theta_N$ be a partition of unity subordinated to this covering, {\it i.e.}, $\theta_i \in \C_c^\infty(A_i;[0,1])$, and $\sum_{i=1}^N \theta_i=1$ on $\partial \O$. Since $\theta_i u \in \C^\infty(\overline{\O};\R^n)$, then its trace $\gamma(\theta_i u)$ on $\partial \O \cap A_i$ satisfies \eqref{eq:cont-trace-LD0}, {\it i.e.},
$$\int_{\partial \O \cap A_i} |\gamma(\theta_i u)|\, d\HH^{n-1} \leq C \|\theta_i u\|_{BD(\O \cap A'_i)}\leq  C \| u\|_{BD(\O)},$$
where the constant $C>0$ is independent of $u$. In addition, since $\gamma(\theta_i u)=\theta_i|_{\partial \O} \gamma(u)$ and ${\rm Supp}\, (\theta_i u) \subset A_i \cap \overline \O$, we infer that 
\begin{equation}\label{eq:cont-trace-LD}
\int_{\partial \O} |\gamma(u)|\, d\HH^{n-1}\leq \sum_{i=1}^N \int_{\partial \O \cap A_i}\theta_i |\gamma( u)|\, d\HH^{n-1}\leq  C \| u\|_{BD(\O)}.
\end{equation}

\medskip

{\bf Step 2.} Let us now consider a function $u \in BD(\O)$. Then there exists a sequence $(u_k) \subset \C^\infty(\overline \O;\R^n)$ such that
\begin{equation}\label{eq:Euk}
\begin{cases}
u_k \to u \text{ strongly in }L^1(\O;\R^n),\\
Eu_k \wto Eu \text{ weakly* in }\M(\O;\Ms),\\
|E u_k|(\O) \to |E u|(\O).
\end{cases}
\end{equation}
According to Step 1, the trace $\gamma(u_k)$ of $u_k$ is well defined, and according to \eqref{eq:cont-trace-LD},
\begin{equation}\label{eq:cont-trace-LD2}
\int_{\partial \O} |\gamma(u_k)|\, d\HH^{n-1} \leq C \|u_k\|_{BD(\O)},
\end{equation}
for some constant $C>0$ independent of $k$. 

\medskip

{\bf Step 2a.} We first argue locally as in Step 1a with the same notation. We show that the sequence $(\gamma(u_k))_{k \in \N}$ is a Cauchy sequence in $L^1(\partial \O \cap A;\Rn)$.
Let us fix $0<t<\e<\e_0$. 
According to \eqref{eq:estimBD} and since $A_t^\xi \subset A_\e^\xi$, we infer that
$$\int_{\partial \O \cap A} |u_k(y-t\xi)\cdot \xi - \gamma(u_k)(y)\cdot \xi|\, d\HH^{n-1}(y) \leq C_\xi |Eu_k|(A_t^\xi) \leq C_\xi |Eu_k|(A_\e^\xi).$$
As a consequence, for any integers $k$, $lÊ\in \N$, we infer that
\begin{multline*}
\int_{\partial \O \cap A}Ê|\gamma(u_k)\cdot \xi - \gamma(u_l)\cdot \xi|\, d\HH^{n-1}\\
\leq \int_{\partial \O \cap A} |u_k(y-t\xi)\cdot \xi - \gamma(u_k)(y)\cdot \xi|\, d\HH^{n-1}(y)\\
+\int_{\partial \O \cap A} |u_l(y-t\xi)\cdot \xi- \gamma(u_l)(y)\cdot \xi|\, d\HH^{n-1}(y)\\
+\int_{\partial \O \cap A}|u_k(y-t\xi)\cdot \xi-u_l(y-t\xi)\cdot \xi |\, d\HH^{n-1}(y)\\
\leq C_\xi \bigg(|Eu_k|(A_\e^\xi)+ |Eu_l|(A_\e^\xi)\\
 +\int_{\partial \O \cap A}|u_k(y-t\xi)-u_l(y-t\xi) |\, d\HH^{n-1}(y) \bigg).
\end{multline*}
Integrating the previous inequality with respect to $t \in (0,\e)$, and using Fubini's Theorem in the last integral, we get that 
\begin{multline*}
\int_{\partial \O \cap A}Ê|\gamma(u_k)\cdot \xi - \gamma(u_l)\cdot \xi|\, d\HH^{n-1}\\
\leq C_\xi \left(|Eu_k|(A_\e^\xi)+ |Eu_l|(A_\e^\xi) +\frac{1}{\e}\int_{ \O \cap A'}|u_k-u_l|\, dx \right).
\end{multline*}

As in Step 1a, we first choose $\xi=e_n$, and then, for $i\in \{1,\ldots,n-1\}$, $\xi=\xi_i:=\frac{e_n+\delta e_i}{\sqrt{1+\delta^2}}$ for some $\delta>0$ small enough. Remembering that 
$$\gamma_n(u_k):=\gamma(u_k) \cdot e_n,\; \gamma_i(u_k)=\frac{\sqrt{1+\delta^2}\gamma(u_k) \cdot \xi_i -\gamma_n(u_k)}{\delta}\; \forall\, i\in \{1,\ldots,n-1\},$$
we obtain the estimate
\begin{multline*}
\int_{\partial \O \cap A}Ê|\gamma(u_k) - \gamma(u_l)|\, d\HH^{n-1}
\leq C_{\xi,\delta} \Bigg[ \frac{1}{\e} \int_{\O\cap A'} |u_k-u_l|\, dx  +|Eu_k|(A_\e^{e_n})+ |Eu_l|(A_\e^{e_n}) \\
+\sum_{i=1}^{n-1} \left( |Eu_k|\big(A_\e^{\xi_i}\big)+ |Eu_l|\big(A_\e^{\xi_i}\big)\right) \Bigg].
\end{multline*}

Let us observe that the convergences \eqref{eq:Euk} and \eqref{eq:mu} ensure that $|Eu_k|Ê\wto |Eu|$ weakly* in $[\C_b(\O)]'$. Passing to the upper limit as $k$ and $l \to \infty$ in the previous inequality, we get that
\begin{multline*}
\limsup_{k,l\to \infty} \int_{\partial \O \cap A}Ê|\gamma(u_k) - \gamma(u_l)|\, d\HH^{n-1}\\
\leq C_{\xi,\delta} \Bigg[ |Eu|\big(\overline{A_\e^{e_n}}\big)+ |Eu|\big(\overline{A_\e^{e_n}}\big) +\sum_{i=1}^{n-1} \left( |Eu|\big(\overline{A_\e^{\xi_i}}\big)+ |Eu|\big(\overline{A_\e^{\xi_i}}\big)\right) \Bigg].
\end{multline*}
Since the sets $\overline{A_\e^{e_n}}$ and $\overline{A_\e^{\xi_i}}$ monotonically decrease to $A \cap \partial \O$, while $|Eu|$ in concentrated in $\O$, we deduce that the right hand side of the previous inequality tends to zero as $\e \searrow 0$. Therefore the sequence $(\gamma(u_k))_{k \in \N}$ is of Cauchy type in $L^1(\partial \O \cap A;\Rn)$, and thus, we can find some function $\gamma_A(u) \in L^1(\partial \O \cap A;\Rn)$ such that $\gamma(u_k) \to \gamma_A(u)$ strongly $L^1(\partial \O \cap A;\Rn)$. 
In addition, thanks to the usual integration by parts formula, we have for all $\varphi \in \C^1_c(A)$
$$\int_{\O \cap A} u_k \odot \nabla \varphi  \, dx +\int_{\O\cap A} \varphi Eu_k\, dx = \int_{\partial \O \cap A}Ê\gamma(u_k) \odot \nu\, \varphi \, d\HH^{n-1}.$$
At this point, we remark that the convergences \eqref{eq:Euk} and \eqref{eq:mu} ensure that $Eu_k \wto Eu$ weakly* in $[\C_b(\O;\Ms)]'$. It is then possible to pass to the limit in the previous formula, and get that
$$\int_{\O \cap A} u \odot \nabla \varphi  \, dx +\int_{\O\cap A} \varphi \, d Eu = \int_{\partial \O \cap A}Ê\gamma_A(u) \odot \nu\, \varphi \, d\HH^{n-1}.$$

\medskip

{\bf Step 2b.} We now define the trace of $u$ on the whole boundary. Using the same notation as in Step 1b, we cover $\partial \O$ by finitely many open sets $A_1,\ldots,A_N$. Step 2a ensures the existence, for each $i \in \{1,\ldots,N\}$, of a function $\gamma_{A_i}(u) \in L^1(\partial \O \cap A_i;\Rn)$ such that
$$\int_{\O \cap A_i} u \odot \nabla \varphi \, dx +\int_{\O\cap A_i} \varphi \, dEu = \int_{\partial \O \cap A_i}Ê\gamma_{A_i}(u) \odot \nu\, \varphi \, d\HH^{n-1}$$
for every $\varphi \in \C^1_c(A_i)$. In particular, taking $\varphi \in \C^1_c(A_i \cap A_j)$, for $i \neq j$, yields
$$\int_{\partial \O \cap A_i\cap A_j}Ê\gamma_{A_i}(u) \odot \nu\, \varphi \, d\HH^{n-1}=\int_{\partial \O \cap A_i\cap A_j}Ê\gamma_{A_j}(u) \odot \nu\, \varphi \, d\HH^{n-1}$$
which implies that $\gamma_{A_i}(u) \odot \nu=\gamma_{A_j}(u) \odot \nu$ $\HH^{n-1}$-a.e. on $\partial \O \cap A_i\cap A_j$. According to \eqref{eq:ab}, we infer that $\gamma_{A_i}(u)=\gamma_{A_j}(u)$ $\HH^{n-1}$-a.e. on $\partial \O \cap A_i\cap A_j$. It is thus possible to define the mapping $\gamma : BD(\O) \to L^1(\partial \O;\Rn)$
by setting $\gamma(u):=\gamma_{A_i}(u)$ $\HH^{n-1}$-a.e. on $\partial \O \cap A_i$. It has the property that $\gamma(u_k) \to \gamma(u)$ strongly in $L^1(\partial \O \cap A_i;\Rn)$ for all $i \in \{1,\ldots,N\}$, and consequently,
\begin{equation}\label{eq:conv-trace}
\gamma(u_k) \to \gamma(u) \quad \text{strongly in }L^1(\partial \O;\Rn).
\end{equation}
In particular, passing to the limit in \eqref{eq:cont-trace-LD2}, and invoking the convergences \eqref{eq:Euk} yields
\begin{equation}\label{eq:cont-trace-BD2}
\int_{\partial \O} |\gamma(u)|\, d\HH^{n-1} \leq C \|u\|_{BD(\O)}.
\end{equation}

\medskip

{\bf Step 3.} Let us show the integration by parts formula \eqref{eq:IPP2}, and, as a byproduct, the uniqueness and the linearity of $\gamma$. Since $u_k \in \C^\infty(\overline \O;\R^n)$, the usual integration by parts formula implies that for all $\varphi \in \C^1(\R^n)$,
$$\int_\O u_kÊ\odot \varphi \, dx + \int_\O \varphi  Eu_k\, dx= \int_{\partial \O} \gamma(u_k) \odot \nu \, \varphi \, d\HH^{n-1}.$$
According to the convergences \eqref{eq:Euk}, \eqref{eq:mu} and \eqref{eq:conv-trace}, we are in position to pass to the limit in the previous equality to get that 
$$\int_\O uÊ\odot \varphi \, dx + \int_\O \varphi  \, dEu= \int_{\partial \O} \gamma(u) \odot \nu \, \varphi \, d\HH^{n-1}.$$

It remains to show that the trace mapping $\gamma$ is unique, linear and continuous. To show the uniqueness, assume that $\tilde \gamma:BD(\O) \to L^1(\partial \O;\R^n)$ is another map satisfying the integration by parts formula \eqref{eq:IPP2}. Then for each $\varphi \in \C^1(\R^n)$, we infer that 
$$ \int_{\partial \O} \gamma(u) \odot \nu \, \varphi \, d\HH^{n-1}= \int_{\partial \O} \tilde \gamma(u) \odot \nu \, \varphi \, d\HH^{n-1},$$
which, thank to \eqref{eq:ab}, leads to $\gamma(u) =\tilde \gamma(u)$ $\HH^{n-1}$-a.e. on $\partial \O$. The same argument can be reproduced to show that $\gamma$ is linear, and the continuity is therefore a consequence of \eqref{eq:cont-trace-BD2}.

\medskip

{\bf Step 4.} We finally prove \eqref{eq:C0}. Using the same notation as that of Step 1a, we first show that for any $u \in BD(\O)$,
\begin{equation}\label{eq:estimBD2}
\int_0^\e \int_{\partial \O \cap A} |u(y-t\xi)\cdot \xi - \gamma(u)(y) \cdot \xi|\, d\HH^{n-1}(y)\, dt \leq C_\xi \e  |Eu|\big(\overline{A_\e^\xi}\big).
\end{equation}
Indeed, let $(u_k) \subset \C^\infty(\overline \O;\Rn)$ be a sequence as in \eqref{eq:Euk}. Then according to \eqref{eq:estimBD}, we have
\begin{multline}\label{eq:estimBD1}
\int_0^\e \int_{\partial \O \cap A} |u_k(y-t\xi)\cdot \xi  - \gamma(u_k)(y) \cdot \xi|\, d\HH^{n-1}(y)\, dt\\
\leq C_\xi \int_0^\e \int_{A_t^\xi} |Eu_k\xi \cdot \xi| \, dx\, dt
\leq  C_\xi \e |Eu_k|(A_\e^\xi),
\end{multline}
since $A_t^\xi \subset A_\e^\xi$ whenever $t<\e$. We first note that, according to Fubini's Theorem
\begin{multline*}
\int_0^\e \int_{\partial \O \cap A} |u(y-t\xi) \cdot \xi - u_k(y-t\xi) \cdot \xi|\, d\HH^{n-1}(y)\, dt \\
= \int_{A_\e^\xi} |u_k - u|Ê\sqrt{1+|\nabla a_\xi|^2}\, dx
\leq  C_\xi \int_{\O \cap A'}Ê|u_k-u|\, dx \to 0,
\end{multline*}
while, by \eqref{eq:mu}, we have $|Eu_k|\wto |Eu|$ weakly* in $[\C_b(\O)]'$. Relation \eqref{eq:estimBD2} then follows from \eqref{eq:conv-trace} by taking the limit as $k \to \infty$ in \eqref{eq:estimBD1}. 

Taking in particular $u \in BD(\O) \cap \C(\overline \O;\Rn)$, then \eqref{eq:estimBD2} leads to 
\begin{equation}\label{eq:C01}
\int_{\partial \O \cap A} \left|\frac{1}{\e}\int_0^\e u(y-t\xi)\cdot \xi \, dt - \gamma(u)(y) \cdot \xi\right|\, d\HH^{n-1}(y)\leq C_\xi |Eu|\big(\overline{A_\e^\xi}\big).
\end{equation}
On the other hand, the continuity of $u$ ensures that for all $y \in \partial \O \cap A$,
$$\frac{1}{\e} \int_0^\e u(y-t\xi)\cdot \xi\, dt \to u(y)\cdot \xi$$
as $\e \to 0$. Passing to the limit in \eqref{eq:C01} yields $u(y)\cdot \xi=\gamma(u)(y) \cdot \xi$ for all $y \in \partial \O \cap A$, and \eqref{eq:C0} follows from the fact that the previous property holds for every $\xi \in \Sn^{n-1}$ with $|\xi -e_n|<\eta_0$ which contains a basis of $\Rn$.
\end{proof}

\begin{rem}\label{rmk:cont-trace}
In the proof of Theorem \ref{thm:BD}, we actually proved that if $u \in BD(\O)$ and $(u_k) \subset \C^\infty(\overline \O;\Rn)$ is such that $u_k \to u$ strictly in $BD(\O)$, then $\gamma(u_k) \to \gamma(u)$ strongly in $L^1(\partial \O;\Rn)$. 
\end{rem}

By construction, the trace mapping $BD(\O) \to L^1(\partial \O;\Rn)$ is continuous with respect to the norm topology of $BD(\O)$, nevertheless, it is not weakly* continuous. The following result states that the trace is continuous with respect to the strict convergence of $BD(\O)$ (see \cite{ST1,Suquet2}).

\begin{prop}\label{prop:cont-trace}
Let $\O\subset \Rn$ be a bounded open set with Lipschitz boundary. Let $u \in BD(\O)$ and $(u_k) \subset BD(\O)$ be such that $u_k \to u$ strictly in $BD(\O)$. Then $\gamma(u_k) \to \gamma(u)$ strongly in $L^1(\partial \O;\Rn)$.
\end{prop}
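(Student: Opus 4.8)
The plan is to adapt the local analysis already carried out in Step 2a of the proof of Theorem \ref{thm:BD}, where the same kind of Cauchy estimate was used, now applied to two different sequences: the given $(u_k)$ and a fixed smooth approximating sequence for $u$. The essential mechanism is the local estimate \eqref{eq:estimBD2}, valid for \emph{arbitrary} $v \in BD(\O)$, which controls the discrepancy between the interior slices $v(y-t\xi)\cdot\xi$ and the trace $\gamma(v)(y)\cdot\xi$ in terms of $|Ev|$ near the boundary. First I would reduce to the local setting of Step 1a: fix a boundary chart with open sets $A \subset\subset A'$, a Lipschitz graph, and admissible directions $\xi$ with $|\xi-e_n|<\eta_0$, so that \eqref{eq:estimBD2} applies simultaneously to $u_k$ and to $u$.

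The core step is a triangle inequality, in the spirit of the Cauchy estimate in Step 2a. For admissible $\xi$ and $0<t<\e<\e_0$, after integrating \eqref{eq:estimBD2} in $t$ over $(0,\e)$, I would write
\begin{multline*}
\int_{\partial \O \cap A} |\gamma(u_k)\cdot \xi - \gamma(u)\cdot \xi|\, d\HH^{n-1}
\leq \frac{1}{\e}\int_0^\e\!\int_{\partial \O \cap A} |u_k(y-t\xi)\cdot \xi - \gamma(u_k)(y)\cdot \xi|\, d\HH^{n-1}(y)\,dt\\
+\frac{1}{\e}\int_0^\e\!\int_{\partial \O \cap A} |u(y-t\xi)\cdot \xi - \gamma(u)(y)\cdot \xi|\, d\HH^{n-1}(y)\,dt\\
+\frac{1}{\e}\int_0^\e\!\int_{\partial \O \cap A}|u_k(y-t\xi)\cdot \xi-u(y-t\xi)\cdot \xi |\, d\HH^{n-1}(y)\,dt.
\end{multline*}
The first two terms are bounded by $C_\xi|Eu_k|(\overline{A^\xi_\e})$ and $C_\xi|Eu|(\overline{A^\xi_\e})$ via \eqref{eq:estimBD2}, and the third term, by the area formula and Fubini as in Step 1a, is bounded by $(C_\xi/\e)\int_{\O\cap A'}|u_k-u|\,dx$, which tends to $0$ since $u_k \to u$ in $L^1$. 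I would then select $\xi=e_n$ and $\xi=\xi_i$ ($i=1,\dots,n-1$) exactly as before, and recombine the components through the linear relations defining $\gamma_i$, to pass from control of $\gamma(u_k)\cdot\xi-\gamma(u)\cdot\xi$ to control of the full vector $|\gamma(u_k)-\gamma(u)|$.

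The decisive point is how strict convergence enters. Because $|Eu_k|(\O)\to|Eu|(\O)$, the implication \eqref{eq:mu} gives $|Eu_k|\wto|Eu|$ weakly* in $[\C_b(\O)]'$, so passing to the upper limit as $k\to\infty$ in the recombined inequality yields
$$
\limsup_{k\to\infty}\int_{\partial \O \cap A}|\gamma(u_k)-\gamma(u)|\, d\HH^{n-1}
\leq C_{\xi,\delta}\Big(|Eu|\big(\overline{A^{e_n}_\e}\big)+\sum_{i=1}^{n-1}|Eu|\big(\overline{A^{\xi_i}_\e}\big)\Big).
$$
Letting $\e\searrow 0$, the closed sets $\overline{A^\xi_\e}$ decrease to $\partial\O\cap A$, on which $|Eu|$ vanishes (since $Eu$ does not charge $\HH^{n-1}$-negligible sets, and in particular is concentrated in $\O$), so the right-hand side tends to $0$. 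Finally I would cover $\partial\O$ by finitely many such charts $A_1,\dots,A_N$ and sum the local estimates using the partition of unity to obtain convergence on all of $\partial\O$.

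The main obstacle is the correct use of the weak* upper semicontinuity: one must pass to the upper limit using the \emph{closed} sets $\overline{A^\xi_\e}$ rather than the open sets $A^\xi_\e$, since weak* convergence of the positive measures $|Eu_k|$ only guarantees $\limsup_k|Eu_k|(\overline{A^\xi_\e})\leq|Eu|(\overline{A^\xi_\e})$ on compact or closed sets (this is precisely where the hypothesis $|Eu_k|(\O)\to|Eu|(\O)$, hence genuine strict rather than mere weak* convergence, is indispensable). Everything else is a faithful repetition of the estimates already established in the proof of Theorem \ref{thm:BD}, with the single modification that the two smooth sequences are replaced by $(u_k)$ and $(u)$ and \eqref{eq:estimBD2} is invoked in its general $BD$ form.
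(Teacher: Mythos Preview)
Your argument is correct but proceeds along a genuinely different route than the paper. The paper's proof is a short diagonal argument: for each $k$ one picks a smooth approximation $u_k^{j_k}$ close to $u_k$ both strictly and in trace, so that the resulting smooth diagonal sequence converges strictly to $u$; Remark \ref{rmk:cont-trace} (already proved inside Theorem \ref{thm:BD}) then yields $\gamma(u_k^{j_k})\to\gamma(u)$, and the triangle inequality closes the loop. In contrast, you bypass smooth approximations entirely and redo the local chart analysis of Step~2a, this time comparing $u_k$ directly with $u$ by means of the $BD$-level estimate \eqref{eq:estimBD2}; strict convergence enters exactly where you say, through \eqref{eq:mu} and the upper semicontinuity $\limsup_k|Eu_k|(\overline{A^\xi_\e})\le |Eu|(\overline{A^\xi_\e})$. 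The paper's route is shorter and avoids revisiting the chart machinery, while yours is more self-contained in that it does not invoke an auxiliary smooth sequence or a diagonal extraction. One small clarification: the reason $|Eu|$ vanishes on the limiting set $\bigcap_\e \overline{A^\xi_\e}\subset\partial\O$ is simply that $Eu$ is by definition a measure on $\O$; the fact that $Eu$ does not charge $\HH^{n-1}$-negligible sets is not needed here. Also, for the globalization step a finite cover suffices and the partition of unity is not really required, since you are proving convergence to zero on each chart.
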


\begin{proof}
According to \cite[Theorem II-3.2]{Temam}, for each $k \in \N$, there exists a sequence $(u_k^j) \subset \C^\infty(\overline \O;\Rn)$ such that $u^j_k \to u_k$ strictly in $BD(\O)$ as $j \to \infty$, and Remark \ref{rmk:cont-trace} ensures that $\gamma(u_k^j) \to \gamma(u_k)$ strongly in $L^1(\partial \O;\Rn)$ as $j \to \infty$. It is thus possible to find an increasing sequence $(j_k) \nearrow \infty$ such that for each $k\geq 1$,
$$\|u_k^{j_k}Ê- u_k\|_{L^1(\O)}+ \big| |Eu_k^{j_k}|(\O) - |Eu_k|(\O)\big|+\|\gamma(u_k^{j_k})Ê- \gamma(u_k)\|_{L^1(\partial \O)}<\frac{1}{k}.$$
We have thus constructed a sequence $(u_k^{j_k}) \subset \C^\infty(\overline \O;\Rn)$ such that $u_k^{j_k} \to u$ strictly in $BD(\O)$, which also satisfies $\gamma(u_k^{j_k}) \to \gamma(u)$ strongly in $L^1(\partial \O;\Rn)$ thanks again to Remark \ref{rmk:cont-trace}. We finally deduce that $\gamma(u_k) \to \gamma(u)$ strongly in $L^1(\partial \O;\Rn)$.
\end{proof}

In the following result, we show that the pointwise values of the trace $\gamma(u)$ of a function $u \in BD(\O)$ can be recovered by taking limits of averages of $u$ on balls centered on the boundary, and intersected with $\O$. 

\begin{prop}\label{prop:trace-pc}
Let $\O\subset \Rn$ be a bounded open set with Lipschitz boundary, and $u \in BD(\O)$. Then for $\HH^{n-1}$-a.e. $x \in \partial \O$,
$$\lim_{\varrho \to 0} \frac{1}{\varrho^n}\int_{B_\varrho(x) \cap \O} |u(y) - \gamma(u)(x)|\, dy =0.$$
\end{prop}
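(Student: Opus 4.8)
The plan is to recover the pointwise value $\gamma(u)(x)$ from the volume average by slicing $B_\varrho(x)\cap\O$ along transversal directions — exactly the foliation already exploited in Step~1a and Step~4 of the proof of Theorem~\ref{thm:BD} — and then to absorb the two resulting errors into a density estimate for $|Eu|$ and into the Lebesgue point property of $\gamma(u)$ on $\partial\O$. Since the assertion is local, I would first cover $\partial\O$ by finitely many charts as in Step~1b and reduce to proving the conclusion for $\HH^{n-1}$-a.e. $x$ inside a single chart in which $\partial\O=\{x_n=a(x')\}$ with $\O$ lying below. Fixing $\xi_0:=e_n$ and $\xi_i:=(e_n+\delta e_i)/\sqrt{1+\delta^2}$ for $i=1,\dots,n-1$, with $\delta$ so small that $|\xi_i-e_n|<\eta_0$, Lemma~\ref{lem:geom} makes every $\xi_i$ transversal, and $\{\xi_0,\dots,\xi_{n-1}\}$ is a basis of $\R^n$; hence $|v|\le C\sum_{i=0}^{n-1}|v\cdot\xi_i|$ for all $v\in\R^n$, so it suffices to prove, for each fixed $\xi\in\{\xi_0,\dots,\xi_{n-1}\}$, that
$$\lim_{\varrho\to0}\frac1{\varrho^n}\int_{B_\varrho(x)\cap\O}|(u(y)-\gamma(u)(x))\cdot\xi|\,dy=0\qquad\text{for }\HH^{n-1}\text{-a.e. }x.$$

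I would call $x\in\partial\O$ \emph{good} if it is a Lebesgue point of $\gamma(u)$ for $\HH^{n-1}\res\partial\O$ and if $\lim_{r\to0}r^{1-n}|Eu|(B_r(x))=0$. Fix a good $x$, write $c:=\gamma(u)(x)$, and let $\varrho$ be small. Transversality of $\xi$ yields a uniform comparison between the Euclidean distance and the slicing parameter, so that the slab $V_\varrho:=\{z-t\xi:\ z\in\partial\O\cap B_{C\varrho}(x),\ 0<t<C\varrho\}$ lies in $\O$, contains $B_\varrho(x)\cap\O$, and is contained in $B_{C'\varrho}(x)$, while the map $(z,t)\mapsto z-t\xi$ has Jacobian bounded by $C_\xi$ (area formula for the Lipschitz graph $a_\xi$, as in \eqref{eq:estimBD}). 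After this change of variables and the insertion of $\gamma(u)(z)$,
$$\frac1{\varrho^n}\int_{B_\varrho(x)\cap\O}|(u(y)-c)\cdot\xi|\,dy\le\frac{C_\xi}{\varrho^n}\big(I_\varrho+II_\varrho\big),$$
where $I_\varrho:=\int_0^{C\varrho}\!\int_{\partial\O\cap B_{C\varrho}(x)}|(u(z-t\xi)-\gamma(u)(z))\cdot\xi|\,d\HH^{n-1}(z)\,dt$ and $II_\varrho:=\int_0^{C\varrho}\!\int_{\partial\O\cap B_{C\varrho}(x)}|(\gamma(u)(z)-c)\cdot\xi|\,d\HH^{n-1}(z)\,dt$.

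For $I_\varrho$ I would invoke the localized form of \eqref{eq:estimBD2}, whose proof applies verbatim with $\partial\O\cap A$ replaced by the Borel set $\partial\O\cap B_{C\varrho}(x)$; this gives $I_\varrho\le C_\xi\,C\varrho\,|Eu|(\overline{V_\varrho})$, and since $|Eu|$ does not charge $\partial\O$ while $V_\varrho\subset B_{C'\varrho}(x)$, one obtains $\varrho^{-n}I_\varrho\le C\,(C'\varrho)^{1-n}|Eu|(B_{C'\varrho}(x))\to0$ because $x$ is good. In $II_\varrho$ the integrand is independent of $t$, so $II_\varrho=C\varrho\int_{\partial\O\cap B_{C\varrho}(x)}|(\gamma(u)(z)-c)\cdot\xi|\,d\HH^{n-1}(z)$, and using $\HH^{n-1}(\partial\O\cap B_r(x))\le Cr^{n-1}$ for the Lipschitz boundary, $\varrho^{-n}II_\varrho\le C\varrho^{1-n}\int_{\partial\O\cap B_{C\varrho}(x)}|\gamma(u)(z)-\gamma(u)(x)|\,d\HH^{n-1}(z)\to0$ at a Lebesgue point of $\gamma(u)$.

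It remains to check the two genericity statements, and the vanishing of the $(n-1)$-density is the step I expect to be the crux. Lebesgue points are $\HH^{n-1}$-a.e. on $\partial\O$ by the differentiation theorem for the Radon measure $\HH^{n-1}\res\partial\O$. For the density, set $\mu:=|Eu|$, a finite measure concentrated on the \emph{open} set $\O$, so that $\mu(\partial\O)=0$, and apply the density comparison theorem \cite[Theorem 2.56]{AFP}: for each $t>0$ the Borel set $E_t:=\{x\in\partial\O:\ \limsup_{r\to0}r^{1-n}\mu(B_r(x))\ge t\}$ satisfies $\HH^{n-1}(E_t)\le C\,t^{-1}\mu(E_t)\le C\,t^{-1}\mu(\partial\O)=0$; letting $t\downarrow0$ shows that $\lim_{r\to0}r^{1-n}|Eu|(B_r(x))=0$ for $\HH^{n-1}$-a.e. $x\in\partial\O$, so $\HH^{n-1}$-a.e. point is good and the proof concludes. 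The only remaining technical care lies in the geometric change of variables (the inclusions $B_\varrho(x)\cap\O\subset V_\varrho\subset B_{C'\varrho}(x)\cap\O$ and the Jacobian bound), which however merely reprises the computations of Step~1a and Step~4.
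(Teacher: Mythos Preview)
Your argument is correct and follows essentially the same approach as the paper: one identifies the $\HH^{n-1}$-a.e.\ ``good'' points of $\partial\O$ (Lebesgue points of $\gamma(u)$ with vanishing $(n-1)$-density of $|Eu|$, the latter via \cite[Theorem~2.56]{AFP} and the fact that $|Eu|(\partial\O)=0$), then slices $B_\varrho(x)\cap\O$ along a finite family of transversal directions $\xi$, inserts $\gamma(u)(z)$, and controls the two resulting terms by \eqref{eq:estimBD2} and the Lebesgue point property respectively. The only cosmetic difference is that the paper fixes the good point first and then localizes, while you localize first and verify genericity at the end.
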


\begin{proof}
Let $x \in \partial \O$ be a Lebesgue point of $\gamma(u)$ which also satisfies
$$\lim_{\varrho \to 0} \frac{|Eu|(B_\varrho(x) \cap \O)}{\varrho^{n-1}}=0.$$
Note that $\HH^{n-1}$ almost every points of $\partial \O$ satisfy these properties. Indeed, the first one is a consequence of Lebesgue's differentiation theorem. For what concerns the second property, it suffices to show that, for each $k\geq 1$, the Borel sets
$$B_k:=\left\{x \in \partial \O : \limsup_{\varrho \to 0} \frac{|Eu|(B_\varrho(x) \cap \O)}{\omega_{n-1}\varrho^{n-1}}\geq \frac{1}{k} \right\}$$
are $\HH^{n-1}$ negligible. This property is a consequence \cite[Theorem 2.56]{AFP} since we have $\HH^{n-1}(B_k) \leq k |Eu|(B_k) =0$, because the measure $Eu$ is concentrated on $\O$.

By the Lipschitz regularity of $\partial \O$, there exist an open neighborhood  $A \subset \R^n$ of $x$, an orthonormal basis $(e_1,\ldots,e_n)$ of $\R^n$, and a Lipschitz mapping $a : \R^{n-1} \to \R$ such that
$$\begin{cases}
\O \cap A= \{x=(x',x_n) \in A : x_n <a(x')\},\\
\partial \O \cap A = \{x=(x',x_n) \in A : x_n =a(x')\}.
\end{cases}$$
As before, we define the graph of $a$ by $\Sigma:=\{x=(x',x_n) \in \Rn : x_n =a(x')\}$, and according to Lemma \ref{lem:geom}, there exists $\eta_0>0$ such that for any $\xi \in \Sn^{n-1}$ with $|\xi - e_n| < \eta_0$, then $\Sigma=\{x \in \R^n : x \cdot \xi = a_\xi(x - (x\cdot \xi) \xi)\}$ for some Lipschitz function $a_\xi :\Pi_\xi \to \R$ with Lipschitz constant $L>0$. It is enough to check that for any $\xi \in \Sn^{n-1}$ with $|\xi-e_n|<\eta_0$, then
$$\lim_{\varrho \to 0} \frac{1}{\varrho^n}\int_{B_\varrho(x) \cap \O} |u(y)\cdot \xi - \gamma(u)(x)\cdot \xi|\, dy =0,$$
since, as already seen, this family of vectors $\xi$ contains a basis of $\R^n$.

For simplicity, we denote by $c:=\sqrt{1+L^2}$. Let $\varrho>0$ be small enough so that $B_{5c\rho}(x) \subset A$, and
$$B_\varrho(x) \cap \O \subset \left\{z=y-t\xi : y \in \partial \O \cap B_{2c\varrho}(x), \, 0<t<2c \varrho\right\} \subset\subset B_{5c\rho}(x) \cap \O.$$
By this choice of $\varrho$, Fubini's Theorem implies that 
\begin{multline*}
\int_{B_\varrho(x)\cap \O}  |u(y)\cdot \xi - \gamma(u)(x)\cdot \xi|\, dy\\
 \leq \int_0^{2c\varrho} \int_{\partial \O \cap B_{2c\varrho}(x)} |u(y-t\xi)\cdot \xi - \gamma(u)(x)\cdot \xi|\, d\HH^{n-1}(y)\, dt,
\end{multline*}
and thus
\begin{multline*}
\int_{B_\varrho(x)\cap \O} |u(y)\cdot \xi - \gamma(u)(x)\cdot \xi|\, dy\\
 \leq 2 c \varrho  \int_{\partial \O \cap B_{2c\varrho}(x)} |\gamma(u)(y)\cdot \xi - \gamma(u)(x)\cdot \xi|\, d\HH^{n-1}(y)\\
+  \int_0^{2c \varrho}\int_{\partial \O \cap B_{2c \varrho}(x)} |u(y-t\xi)\cdot \xi - \gamma(u)(y)\cdot \xi|\, d\HH^{n-1}(y)\, dt.
\end{multline*}
Therefore, by \eqref{eq:estimBD2} and our choice of $x$ we infer that
\begin{multline*}
\frac{1}{\varrho^n}\int_{B_\varrho(x)\cap \O} |u(y)\cdot \xi - \gamma(u)(x)\cdot \xi|\, dy\\ 
\leq \frac{2c}{\varrho^{n-1}} \int_{\partial \O \cap B_{2c\varrho}(x)} |\gamma(u)(y) - \gamma(u)(x)|\, d\HH^{n-1}(y)\\
+ 2c\,  C_\xi \frac{ |Eu|(B_{5c\varrho}(x) \cap \O) }{\varrho^{n-1}}\to 0,
\end{multline*}
which completes the proof of the Proposition.
\end{proof}

\section{Traces on rectifiable sets}\label{sec:rect}

In this section we show that functions of bounded deformation admit one-sided Lebesgue limits on countably $\HH^{n-1}$-rectifiable subsets of $\O$. This result strongly relies on the pointwise characterization of the trace stated in Proposition \ref{prop:trace-pc}. 
\begin{prop}\label{prop:rect}
Let $\O$ be an open subset of $\Rn$, $u \in BD(\O)$, and $\Gamma$ be a countably $\HH^{n-1}$-rectifiable subset of $\O$. Then for $\HH^{n-1}$-a.e. $x \in \Gamma$, there exist one-sided Lebesgue limits $u_\Gamma^\pm(x)$ with respect to the approximate unit normal $\nu_\Gamma(x)$ to $\Gamma$, {\it i.e.},
$$\lim_{\varrho \to 0^+}\int_{B^\pm_\varrho(x,\nu_\Gamma(x))} |u(y) - u_\Gamma^\pm(x)|\, dy =0,$$
where $B^\pm_\varrho(x,\nu_\Gamma(x)):=\{y \in B_\varrho(x) : \pm (y-x) \cdot \nu_\Gamma(x) >0\}$. In addition, we have the representation
$$Eu \res \Gamma=(u_\Gamma^+ - u_\Gamma^-) \odot \nu_\Gamma \HH^{n-1} \res \Gamma.$$
\end{prop}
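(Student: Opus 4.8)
\emph{The plan is to} localise the problem on a single $\C^1$ hypersurface and to transfer to its two sides the boundary trace results of Section~\ref{sec:trace}. Writing $\Gamma=\bigcup_{i}\Gamma_i\cup N$ with $\Gamma_i\subset M_i$ a $\C^1$ $(n-1)$-manifold and $\HH^{n-1}(N)=0$, and replacing $\Gamma_i$ by $\Gamma_i\setminus\bigcup_{j<i}\Gamma_j$ so that the pieces are disjoint, it is enough---since $Eu$ does not charge the $\HH^{n-1}$-negligible set $N$ and $\nu_\Gamma=\pm\nu_{M_i}$ $\HH^{n-1}$-a.e.\ on $\Gamma\cap M_i$ by \eqref{approx}---to establish both assertions for $\HH^{n-1}$-a.e.\ point of a fixed manifold $M:=M_i$. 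Given $x_0\in M$, I would choose coordinates in which $M=\{x_n=a(x')\}$ for some $\C^1$ function $a$ near $x_0$, together with a small cylinder $Q\subset\subset\O$ centred at $x_0$ such that the two open sets
$$U^-:=\{(x',x_n)\in Q:x_n<a(x')\},\qquad U^+:=\{(x',x_n)\in Q:x_n>a(x')\}$$
are bounded Lipschitz domains with $\partial U^\pm\cap Q=M\cap Q$ and outer unit normals $\nu_M$ along $M$ for $U^-$ and $-\nu_M$ for $U^+$. Applying Theorem~\ref{thm:BD} to $u|_{U^\pm}\in BD(U^\pm)$ yields traces $\gamma^\pm(u)\in L^1(M\cap Q;\Rn)$, and Proposition~\ref{prop:trace-pc} gives, for $\HH^{n-1}$-a.e.\ $x\in M\cap Q$ and both signs,
$$\lim_{\varrho\to0}\frac1{\varrho^n}\int_{B_\varrho(x)\cap U^\pm}|u(y)-\gamma^\pm(u)(x)|\,dy=0.$$

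\emph{To obtain} the one-sided Lebesgue limits I set $u_\Gamma^\pm(x):=\gamma^\pm(u)(x)$ and replace the curved sets $B_\varrho(x)\cap U^\pm$ by the half-balls $B^\pm_\varrho(x,\nu_M(x))$; I treat the $-$ side, the $+$ side being symmetric. Since $a$ is differentiable at $x'$, the graph of $a$ differs from its tangent plane at $x$ by $o(\varrho)$ over $B_\varrho(x)$, so the symmetric difference $D_\varrho:=\big(B_\varrho(x)\cap U^-\big)\triangle B^-_\varrho(x,\nu_M(x))$ has Lebesgue measure $o(\varrho^n)$. On the part of $D_\varrho$ contained in $U^-$ one has $\int|u-\gamma^-(u)(x)|\le\int_{B_\varrho(x)\cap U^-}|u-\gamma^-(u)(x)|=o(\varrho^n)$ by the previous display; on the part contained in $U^+$ one writes $|u-\gamma^-(u)(x)|\le|u-\gamma^+(u)(x)|+|\gamma^+(u)(x)-\gamma^-(u)(x)|$, the first term being bounded by $\int_{B_\varrho(x)\cap U^+}|u-\gamma^+(u)(x)|=o(\varrho^n)$ and the second by $|\gamma^+(u)(x)-\gamma^-(u)(x)|\,|D_\varrho|=o(\varrho^n)$. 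Hence $\frac1{\varrho^n}\int_{B^-_\varrho(x,\nu_M(x))}|u-u_\Gamma^-(x)|\,dy\to0$, as required. \emph{The main obstacle} is precisely this comparison: the slab between $M$ and its tangent plane has negligible volume, but one must still check that it carries a vanishing fraction of $\int|u|$ after normalisation by $\varrho^n$, and this forces the use of \emph{both} one-sided estimates, the estimate on $U^+$ controlling the portion of the crossover slab lying on the opposite side, and vice versa.

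\emph{For the jump representation} I would add the two integration by parts formulas \eqref{eq:IPP2} written for $U^-$ and $U^+$ and tested against an arbitrary $\varphi\in\C^1_c(Q)$. As $\varphi$ is compactly supported in $Q$, the only surviving boundary terms come from $M\cap Q$, with normals $\nu_M$ and $-\nu_M$, so the sum reads
$$\int_{U^-}u\odot\nabla\varphi\,dx+\int_{U^+}u\odot\nabla\varphi\,dx+\int_{U^-}\varphi\,dEu+\int_{U^+}\varphi\,dEu=\int_{M\cap Q}(\gamma^-(u)-\gamma^+(u))\odot\nu_M\,\varphi\,d\HH^{n-1}.$$
Because $M$ is Lebesgue-negligible and $Q=U^-\cup(M\cap Q)\cup U^+$, the four volume integrals add up to $\int_Q u\odot\nabla\varphi\,dx+\int_Q\varphi\,dEu-\int_{M\cap Q}\varphi\,dEu$; the first two cancel by the distributional identity $\int_Q u\odot\nabla\varphi\,dx=-\int_Q\varphi\,dEu$, valid since $\varphi$ has compact support in $Q\subset\O$. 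One is thus left with
$$-\int_{M\cap Q}\varphi\,d(Eu\res M)=\int_{M\cap Q}(\gamma^-(u)-\gamma^+(u))\odot\nu_M\,\varphi\,d\HH^{n-1}.$$
As $\varphi\in\C^1_c(Q)$ is arbitrary and $u_\Gamma^\pm=\gamma^\pm(u)$ on $M\cap Q$, this gives $Eu\res(M\cap Q)=(u_\Gamma^+-u_\Gamma^-)\odot\nu_M\,\HH^{n-1}\res(M\cap Q)$.

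\emph{Finally}, covering $M$ by countably many such cylinders propagates this identity to $Eu\res(M\cap\O)$, and restricting it to $\Gamma_i\subset M_i$ yields $Eu\res\Gamma_i=(u_\Gamma^+-u_\Gamma^-)\odot\nu_\Gamma\,\HH^{n-1}\res\Gamma_i$: indeed both the existence of the one-sided limits and the product $(u_\Gamma^+-u_\Gamma^-)\odot\nu_\Gamma$ are unchanged under the orientation swap $\nu_\Gamma\mapsto-\nu_\Gamma$, $u_\Gamma^+\leftrightarrow u_\Gamma^-$, so the formula is insensitive to the sign ambiguity in $\nu_\Gamma=\pm\nu_{M_i}$. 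Summing over the disjoint pieces $\Gamma_i$ and using once more that $Eu$ does not charge $N$, I conclude $Eu\res\Gamma=(u_\Gamma^+-u_\Gamma^-)\odot\nu_\Gamma\,\HH^{n-1}\res\Gamma$.
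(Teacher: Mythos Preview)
Your proposal is correct and follows essentially the same route as the paper's own proof: localise on a $\C^1$ hypersurface $M$, apply Theorem~\ref{thm:BD} and Proposition~\ref{prop:trace-pc} on the two Lipschitz subdomains $U^\pm$, pass from curved to flat half-balls via the $o(\varrho^n)$ crossover slab controlled using \emph{both} one-sided limits, derive the jump formula by summing the two integration by parts identities, and finally reassemble over the pieces $\Gamma_i$ using the orientation invariance of $(u_\Gamma^+-u_\Gamma^-)\odot\nu_\Gamma$. The only cosmetic differences are that the paper establishes the jump representation before the half-ball limits and works with a ball rather than a cylinder as the localisation set.
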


\begin{proof}
{\bf Step 1.} ÊWe first consider a $(n-1)$-dimensional manifold $M$ of class $\C^1$. Then for each $x_0 \in M$, there exist an open ball $U$ centered at $x_0$ such that $$M \cap U=\{ x=(x',x_n) \in U : x_n=a(x')\},$$ 
for some function $a: \R^{n-1} \to \R$ of class $\C^1$. Let us define the open sets with Lipschitz boundary
$$U^\pm:=\{x = (x',x_n) \in U :\pm x_n>\pm a(x')\}.$$
Let us denote by $\nu_M$ the unit normal to $M$ oriented from $U^-$ to $U^+$, {\it i.e.}
$$\nu_M(y)=\frac{(-\nabla a(y'),1)}{\sqrt{1+|\nabla a(y')|^2}} \quad \text{for all }y \in M \cap U,$$
so that, on $M \cap U$, the normal to $U^-$ is $\nu_M$, while the normal to $U^+$ is $-\nu_M$.
Since $u \in BD(U^\pm)$, according to Theorem \ref{thm:BD}, there exist $u_M^\pm \in L^1(M \cap U;\Rn)$ (the restriction to $M \cap U$ of the trace of $u|_{U^\pm}$) such that for all $\varphi \in \C^1_c(U)$,
$$\int_{U^\pm} u \odot \nabla \varphi\,  dx +\int_{U^\pm} \varphi \, dEu =\mp \int_{M \cap U} u^\pm_M \odot \nu_M\,  \varphi \, d\HH^{n-1}.$$
Summing up both previous relations, and using the definition of the distributional derivative, we infer that
\begin{multline*}
\int_U \varphi \, dEu = - \int_U u \odot \nabla \varphi\, dx=- \int_{U^+} u \odot \nabla \varphi\, dx-\int_{U^-} u \odot \nabla \varphi\, dx\\
=\int_{U^+}  \varphi \, dEu+\int_{U^-}  \varphi \, dEu+\int_{M \cap U} (u_M^+-u_M^-) \odot \nu_M\,  \varphi \, d\HH^{n-1},
\end{multline*}
and thus
$$\int_{M \cap U} \varphi \, dEu =\int_{M \cap U} (u_M^+-u_M^-) \odot \nu_M\,  \varphi \, d\HH^{n-1}.$$
By density, the previous relation holds for any $\varphi \in \C_0(U)$ which implies that 
\begin{equation}\label{eq:repM}
Eu \res (M\cap U)=(u_M^+ - u_M^-) \odot \nu_M \HH^{n-1} \res (M \cap U).
\end{equation}

At this point, the traces $u_M^\pm$ might depend on the open set $U$. We now show that for $\HH^{n-1}$-a.e. $x \in M \cap U$, the triplet $(u_M^+(x),u_M^-(x),\nu_M(x))$ is independent of the local representation of $M$. To this aim, we will prove that $u^\pm_M(x)$ are one-sided Lebesgue limits with respect to the direction $\nu_M(x)$. According to Proposition \ref{prop:trace-pc}, we have that  for $\HH^{n-1}$-a.e. $x \in M \cap U$,
\begin{equation}\label{eq:nuM}
\lim_{\varrho \to 0} \frac{1}{\varrho^n}\int_{B_\varrho(x) \cap U^\pm} |u(y) - u_M^\pm(x)|\, dy =0.
\end{equation}
Let us fix a point $x \in M \cap U$ ({\it i.e.} $x_n=a(x')$) satisfying \eqref{eq:nuM}. Since $a$ is of class $\C^1$, for each $\e>0$, there exists $\delta>0$ such that if $|y'-x'|<\delta$, then
$|a(y') - a(x') - \nabla a(x') (y'-x')|<\e |x'-y'|$. For all $\varrho<\delta$, defining the half balls by $B^\pm_\varrho(x,\nu_M(x)):=\{y \in B_\varrho(x) : \pm (y-x) \cdot \nu_M(x) >0\}$, then 
\begin{multline}\label{eq:last-estim}
\frac{1}{\varrho^n}\int_{B^\pm_\varrho(x,\nu_M(x))} |u(y) - u_M^\pm(x)|\, dy\\
 \leq \frac{1}{\varrho^n}\int_{B^\pm_\varrho(x,\nu_M(x)) \cap U^\pm} |u(y) - u_M^\pm(x)|\, dy\\
+ \frac{1}{\varrho^n}\int_{B^\pm_\varrho(x,\nu_M(x)) \cap U^\mp} |u(y) - u_M^\mp(x)|\, dy\\
+ |u^+_M(x)-u^-_M(x)|\frac{\LL^n(B^\pm_\varrho(x,\nu_M(x)) \cap U^\mp)}{\varrho^n}.
\end{multline}
Thanks to \eqref{eq:nuM}, the two first integrals in the right hand side of \eqref{eq:last-estim} tend to zero as $\varrho \to 0^+$. Concerning the last term, we observe that if $y \in B^\pm_\varrho(x,\nu_M(x)) \cap U^\mp$, then $|x-y|<\varrho$,
$$\pm y_n < \pm a(y'), \quad \pm \nu_M(x)\cdot (y-x) = \frac{\mp \nabla a(x')\cdot (y'-x') \pm (y_n-x_n)}{\sqrt{1+|\nabla a(x')|^2}}>0,$$
and therefore, since $|y'-x'|<\varrho<\delta$, then
$$\pm a(x') \pm \nabla a (x')\cdot (y'-x')  <\pm y_n <\pm a(x') \pm \nabla a (x')\cdot (y'-x')\pm \e|x'-y'|.$$
As a consequence, $\LL^n(B^\pm_\varrho(x,\nu_M(x)) \cap U^\mp) \leq \omega_{n-1}\e \varrho^n$, 
and by the arbitrariness of $\e$, the last term of the right hand side of \eqref{eq:last-estim} is infinitesimal as well. We have thus proved that for $\HH^{n-1}$-a.e. $x \in M \cap U$,
\begin{equation}\label{M1}
\lim_{\varrho \to 0}\frac{1}{\varrho^n}\int_{B^\pm_\varrho(x,\nu_M(x))} |u(y) - u_M^\pm(x)|\, dy=0.
\end{equation}
According to \cite[page 163]{AFP}, it shows that the triplet $(u_M^+(x),u_M^-(x),\nu_M(x))$ is uniquely defined up to a permutation of $(u_M^+(x),u_M^-(x))$ and a change of sign of $\nu_M(x)$. 

This property allows one to define, $\HH^{n-1}$-a.e. on $M$, one sided-Lebesgue limits $u_M^\pm$ with respect to a normal direction $\nu_M$. In addition, a simple covering argument permits to extend the local representation formula \eqref{eq:repM} of $Eu$ on $M$ into the global representation
\begin{equation}\label{M2}
Eu \res M=(u_M^+ - u_M^-) \odot \nu_M \HH^{n-1} \res M.
\end{equation}

{\bf Step 2.} Let us now consider a countably $\HH^{n-1}$-rectifiable set $\Gamma \subset \O$. By definition, 
$$\Gamma=\bigcup_{i=1}^\infty\Gamma_i \cup N,$$
where $\HH^{n-1}(N)=0$, and $\Gamma_i \subset M_i$ for some hypersurfaces $M_i$ of class $\C^1$. In addition, we can assume without loss of generality that $\Gamma_i \cap \Gamma_j =\emptyset$ if $i \neq j$. According to \eqref{M1}, for all $i \in \N$, there exists a set $N_i \subset \O$ with $\HH^{n-1}(N_i)=0$ with the following property: for all $x \in \Gamma_i \setminus N_i$, there exists a triplet $(u_{M_i}^+(x),u_{M_i}^-(x),\nu_{M_i}(x))$ satisfying
$$\lim_{\varrho \to 0}\frac{1}{\varrho^n}\int_{B^\pm_\varrho(x,\nu_{M_i}(x))} |u(y) - u_{M_i}^\pm(x)|\, dy=0.$$
Moreover, by the characterization \eqref{approx} of the approximate normal, for each $i \geq 1$, there exists a set  $Z_i \subset \O$ such that $\HH^{n-1}(Z_i)=0$ and $\nu_\Gamma(x) = \pm \nu_{M_i}(x)$ for all $x \in \Gamma_i \setminus Z_i$.

Let us define the exceptional set $Z:=N \cup \bigcup_{i=1}^\infty(N_i \cup Z_i)$. Then $\HH^{n-1}(Z)=0$, and we define the one sided-traces on $\Gamma \setminus Z$ by
$$u_\Gamma^\pm(x)=
\begin{cases}
u^\pm_{M_i}(x) \text{ if } x \in \Gamma_i \setminus Z \text{ and }\nu_\Gamma(x) =  \nu_{M_i}(x),\\
u^\mp_{M_i}(x) \text{ if } x \in \Gamma_i \setminus Z \text{ and } \nu_\Gamma(x) =  -\nu_{M_i}(x),
\end{cases}$$
so that, for all $x \in \Gamma \setminus Z$,
$$\lim_{\varrho \to 0}\frac{1}{\varrho^n}\int_{B^\pm_\varrho(x,\nu_\Gamma(x))} |u(y) - u_\Gamma^\pm(x)|\, dy=0.$$
In addition, since according to  \cite[Remark 3.3]{ACDM}, one has $|Eu|(Z)=0$,  \eqref{M2} yields
\begin{multline*}
Eu \res \Gamma = \sum_{i=1}^\infty Eu \res (\Gamma_i \setminus Z) =\sum_{i=1}^\infty(u_{M_i}^+ - u_{M_i}^-) \odot \nu_{M_i} \HH^{n-1} \res (\Gamma_i \setminus Z) \\
=\sum_{i=1}^\infty(u_\Gamma^+ - u_\Gamma^-) \odot \nu_\Gamma \HH^{n-1} \res (\Gamma_i \setminus Z)=(u_\Gamma^+ - u_\Gamma^-) \odot \nu_\Gamma \HH^{n-1} \res \Gamma,
\end{multline*}
and the proof is complete.
\end{proof}

\section*{Acknowledgements} 

This research has been supported by the {\sl Agence Nationale de la Recherche} under Grant No.\ ANR 10-JCJC 0106. The author wishes to thank Vincent Millot and Gilles Francfort for useful discussions concerning the content of this work.

\end{document}